\documentclass[12pt,hyperref]{article}
\usepackage{graphicx} 
\usepackage{float} 
\usepackage{amssymb}
\usepackage{amsmath}
\usepackage{mathtools}
\usepackage[thmmarks,amsmath]{ntheorem} 
\usepackage{lineno}
\usepackage[numbers,sort&compress]{natbib}
\usepackage{bm} 
\usepackage{extarrows}
\usepackage{mathrsfs} 
\usepackage{booktabs} 
{
    \theoremstyle{nonumberplain}
    \theoremheaderfont{\bfseries}
    \theorembodyfont{\normalfont}
    \theoremsymbol{\mbox{$\Box$}}
    \newtheorem{proof}{Proof}
    \newtheorem{Sketch of proof}{Sketch of proof}
}

\usepackage{theorem}
\newtheorem{theorem}{Theorem}[section]

\newtheorem{lemma}{Lemma}[section]
\newtheorem{question}{Question}[section]

\newtheorem{corollary}{Corollary}[section]

\newtheorem{claim}{Claim}[section]

\newtheorem{conjecture}{Conjecture}[section]
{
    \theoremheaderfont{\bfseries}
    \theorembodyfont{\normalfont}
    
}
%

\newcommand{\RNum}[1]{\uppercase\expandafter{\romannumeral #1\relax}}


\graphicspath{{figure/}}                                 
\usepackage[a4paper]{geometry}                        
\geometry{left=2.5cm,right=2.5cm,top=2.5cm,bottom=2.5cm} 
\linespread{1.2}   
\setlength{\parskip}{0.3\baselineskip}   


\begin{document}
\title{\bf  Removal paths avoiding vertices\footnote{The author's work is supported by NNSF of China (No.12071260).}}
\date{}
\author{\sffamily Yuzhen Qi,Jin Yan\footnote{Corresponding author. E-mail address: yanj@sdu.edu.cn.}\\
    {\sffamily\small School of Mathematics, Shandong University, Jinan 250100, China }}
\maketitle

{\noindent\small{\bf Abstract:}
In this paper, we show that for any positive integer $m$ and $k\in [2]$, let $G$ be a $(2m+2k+2)$-connected graph and let $a_1,\ldots , a_m, s, t$ be any distinct vertices
of $G$, there are $k$ internally disjoint $s$-$t$ paths $P_1, \ldots, P_k$ in $G$ such that $\{a_1,\ldots , a_m\} \cap \bigcup^{k}_{i=1}V (P_i) = \emptyset$ and $G- \bigcup^{k}_{i=1}V (P_i)$ is 2-connected, which  generalizes
the result  by Chen, Gould and Yu [Combinatorica 23 (2003) 185--203], and Kriesell [J. Graph Theory 36 (2001) 52--58]. The case $k=1$ implies that for any $(2m+5)$-connected graph $G$, any edge $e \in E(G)$, and any distinct vertices $a_1,\ldots , a_m$ of $G-V(e)$, there exists a cycle $C$ in $G- \{a_1,\ldots , a_m\}$ such that $e\in E(C)$ and $G- V(C)$ is 2-connected, which improves the bound $10m+11$ of Y. Hong, L. Kang and X. Yu in [J. Graph Theory 80 (2015) 253--267].

 \vspace{1ex}
{\noindent\small{\bf Keywords:  connectivity, disjoint paths, rooted graphs}}

\vspace{1ex}
{\noindent\small{\bf AMS subject classifications. 05C38; 05C40; 05C75} 

\section{Introduction}
We discuss only finite simple graphs and use standard terminology and notation from \cite{B1} except as indicated.  A path $P$ with end vertices $u, v$ is called an $u$-$v$ path, and for two vertices $x, y \in V(P)$, we denote the subpath of $P$ from $x$ to $y$ by $P[x, y]$, and  let
$P(x, y) = P[x, y]-\{x, y\}$. The following conjecture is a generalization of an old conjecture made by  Lov\'{a}sz \cite{L} in 1975.

\begin{conjecture}\label{conjecture2}
 \cite{K} There exists a function $f = f (k,l)$ such that the following holds. For every $f (k,l)$-connected
graph $G$ and two distinct vertices $s$ and $t$ in $G$, there are $k$ internally disjoint $s$-$t$ paths $P_1, \ldots, P_k$ such that $G-\bigcup^{k}_{i=1}V (P_i)$ is $l$-connected.
\end{conjecture}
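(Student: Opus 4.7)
The plan is to attack Conjecture 1.1 by induction on $l$, using the results of the present paper (together with those of Chen--Gould--Yu and Kriesell) as the base. For $l=0$ one takes $f(k,0)=k+1$ by Menger's theorem, and for $l=1$ the cited work of Kriesell supplies a linear bound in $k$. For $l=2$, the paper's main theorem handles $k\in\{1,2\}$, and I would separately extend it to arbitrary $k$ by iterating the same non-separating-subgraph machinery; this would give a finite $f(k,2)$ for all $k$ as a secondary base.

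For the inductive step assume $f(\cdot,l-1)$ is finite. Given an $N$-connected graph $G$ with $N=f(k,l)$ chosen large in terms of $f(k+l,l-1)$ and $l$, and distinct $s,t\in V(G)$, I would first apply the inductive hypothesis to find $k+l$ internally disjoint $s$-$t$ paths $Q_1,\ldots,Q_{k+l}$ whose deletion leaves an $(l-1)$-connected graph $H$. The idea is then to output $P_i:=Q_i$ for $i\le k$ as the final paths, and to re-absorb the interiors of the ``extra'' paths $Q_{k+1},\ldots,Q_{k+l}$ into a larger residual $H^{+}$ that one hopes is $l$-connected. If $H^{+}$ has an $(l-1)$-separator $S$, then each re-absorbed $Q_j$ with $j>k$ must cross $S$ an even number of times, since its endpoints $s,t$ have been removed and lie outside $H^{+}$; exploiting this together with the disjointness of the $Q_j$'s should contradict $|S|\le l-1$ once $l$ extra paths are present.

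To make this precise I would argue via a minimum counterexample: among all admissible choices of $(Q_1,\ldots,Q_{k+l})$, take one that lexicographically minimizes the number of $(l-1)$-separators of $H^{+}$ and their sizes. A local rerouting of some $Q_j$ through a problematic $S$, justified by the high connectivity of $G$ and by submodularity of the cut function, should strictly decrease this potential and yield the desired contradiction. To organize the reroutings globally I would employ either Mader's splitting-off technique or a tangle-of-order-$l$ framework to describe the lattice of small separators in $H^{+}$ and to select, at each step, a separator of maximum order whose destruction cannot be undone.

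The main obstacle is precisely this global control of the separator lattice: rerouting one $Q_j$ to destroy a given $(l-1)$-separator can create new small separators involving other $Q_{j'}$'s, so a purely local fix need not terminate. I expect a tangle-based potential function to resolve this, but its design for arbitrary $l$ is the step demanding the most novel work, and it would likely force the resulting bound $f(k,l)$ to have a tower-type dependence on $l$, even if the dependence on $k$ remains linear at each level.
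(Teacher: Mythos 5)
You should first note that the statement you were asked to prove is an \emph{open conjecture}: the paper does not prove it, explicitly states that it is wide open for $l\geq 3$, and only records the known values $f(1,1)=3$, $f(1,2)=5$, $f(k,1)=2k+1$, $f(k,2)\leq 3k+2$, before proving its own (different) results about paths avoiding prescribed vertices for $l=2$ and $k\in\{1,2\}$. So there is no ``paper proof'' to match, and any complete argument you gave would be a major new theorem.

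Your proposal is not such an argument; it is a programme with the decisive steps missing, and the steps you do sketch contain genuine errors. The inductive step is the heart of the matter and it does not work as described. First, the parity claim is false in spirit and in detail: an $(l-1)$-separator $S$ of $H^{+}$ separates $H^{+}$ itself, and there is no reason the interiors of the re-absorbed paths $Q_{k+1},\ldots,Q_{k+l}$ must ``cross $S$ an even number of times'' or interact with $S$ at all --- a path's interior can lie entirely on one side of $S$, or even be a single vertex, so having $l$ extra paths gives no contradiction with $|S|\leq l-1$. Indeed, adding the interiors of extra paths to an $(l-1)$-connected graph $H$ can easily \emph{create} small separators rather than destroy them, since those interiors may attach to $H$ at very few vertices. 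Second, the rescue you propose --- a lexicographic minimum counterexample, rerouting through ``problematic'' separators, controlled by submodularity, splitting-off, or a tangle-of-order-$l$ potential --- is exactly the part you concede is ``the step demanding the most novel work''; no construction, no well-defined potential, and no termination argument is given, so nothing is proved. Finally, even your secondary base case ($f(k,2)$ finite for all $k$) is asserted by ``iterating the same machinery'' without proof, although here you could simply cite the known bound $f(k,2)\leq 3k+2$ of Kawarabayashi and Ozeki. In short: the conjecture remains open, and your outline does not close the gap between the known cases and $l\geq 3$.
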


A famous result of Tutte \cite{Tutte} yields $f(1,1) = 3$. The case $l=2$ was independently obtained by \cite{Chen} and \cite{Kriesell}, who showed $f(1,2) = 5$.  It is shown in  \cite{K} that $f(k, 1)= 2k + 1$  and $f(k, 2)\leq  3k+2$. Conjecture \ref{conjecture2} is still (wide) open for $f(k,l\geq  3)$.

Indeed, Tutte \cite{Tutte} proved a stronger reault. That is, for any 3-connected graph $G$ and distinct vertices $a$, $s$, $t$ of $G$, there is an $s$-$t$ path $P$ in $G$ such that $a \notin V(P)$ and $G-V(P)$ is connected. A result of Jung \cite{Jung} implies that for any 6-connected graph $G$ and any distinct vertices $a_1$, $a_2$, $s$, $t$ of $G$, there is an $s$-$t$ path $P$ in $G$ such that $a_1, a_2 \notin V (P)$ and
$G-V(P)$ is connected. These results motivate us to propose the following question.
\begin{question}\label{question2}
Let $m,k,l$ be positive integers. There exists a function $f(m,k,l)$ such that the following holds. For every
 $f(m,k,l)$-connected
graph $G$ and any distinct vertices $a_1,\ldots , a_m, s, t$
of $G$, there are $k$ internally disjoint $s$-$t$ paths $P_1, \ldots, P_k$ in $G$ such that $\{a_1,\ldots , a_m\} \cap \bigcup^{k}_{i=1}V (P_i) = \emptyset$ and $G- \bigcup^{k}_{i=1}V (P_i)$ is $l$-connected.
\end{question}

The result in \cite{Ma} yields $f(m,k,2)\leq 10m+30k+2$. This function may not be optimal since authors in \cite{Ma} used the result that $10k$-connected graph is $k$-linked in \cite{Thomas}, and $10k$ is not to optimal for the $k$-linkage problem. Recently, X. Du, Y. Li , S. Xie and X. Yu \cite{Du} verified Question \ref{conjecture2} in the $k=l=1$ case by  proving the following result.


 \begin{theorem}\label{theorem2}
\cite{Du} Let $m\geq 1$ be an integer and let $G$ be a $(2m+2)$-connected graph. For any distinct vertices $a_1,\ldots , a_m, s, t$ of $G$, there is an $s$-$t$ path $P$ in $G$ such that $\{a_1,\ldots , a_m\} \cap V(P) = \emptyset$ and $G-V(P)$ is connected.
  \end{theorem}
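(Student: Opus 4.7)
The plan is to prove Theorem~\ref{theorem2} by induction on $m$. The base case $m=1$ is immediate from Tutte's theorem, since a $4$-connected graph is certainly $3$-connected. For the inductive step with $m\geq 2$, I would delete the vertex $a_m$ and observe that $G-a_m$ is $(2m+1)$-connected, which comfortably exceeds the $(2(m-1)+2)$-connectivity required to apply the inductive hypothesis with forbidden set $\{a_1,\ldots,a_{m-1}\}$. This yields an $s$-$t$ path $P$ in $G-a_m$ with $V(P)\cap\{a_1,\ldots,a_{m-1}\}=\emptyset$ and $(G-a_m)-V(P)$ connected.

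Two cases close immediately. If $V(G)\setminus V(P)=\{a_m\}$, then $G-V(P)$ is a single vertex and trivially connected. If $a_m$ has at least one neighbor in $V(G)\setminus V(P)\setminus\{a_m\}$, then $G-V(P)$ is the connected graph $(G-a_m)-V(P)$ with $a_m$ attached to it, and remains connected. The remaining hard case is when $X:=V(G)\setminus(V(P)\cup\{a_m\})$ is nonempty (hence a nonempty connected subgraph of $G$) and $N(a_m)\subseteq V(P)$. In that situation $|V(P)|\geq d(a_m)\geq 2m+2$, so $P$ is long, and a Menger argument in $G$ applied to any $x\in X$ and $a_m$, which are non-adjacent, shows that the interface $S:=\{u\in V(P):N(u)\cap X\neq\emptyset\}$ also satisfies $|S|\geq 2m+2$, since every one of the $2m+2$ internally disjoint $x$-$a_m$ paths must first enter $V(P)$ through some vertex of $S$.

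With this structure in hand, I would try to reroute $P$ through $X$ so as to release at least one neighbor of $a_m$ from the new path. Concretely, pick $u,w\in S$ with $u$ preceding $w$ on $P$ and with $P(u,w)$ containing some $v\in N(a_m)$, and let $Q$ be a $u$-$w$ path in $G[X\cup\{u,w\}]$, which exists since $X$ is connected and both $u,w$ have neighbors in $X$. Set $P':=P[s,u]+Q+P[w,t]$. Then $P'$ is an $s$-$t$ path in $G$ disjoint from $\{a_1,\ldots,a_m\}$, and $v\in N(a_m)\setminus V(P')$, which would drop the situation into the second easy case above.

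The main obstacle will be certifying that $G-V(P')$ is actually connected after the reroute. The freed vertices decompose as $\{a_m\}$, the residue $X\setminus V(Q)$, and the interior $P(u,w)$; the vertex $a_m$ attaches to $P(u,w)$ via $v$, but $X\setminus V(Q)$ has no edge to $a_m$ and, depending on the choice of $u$ and $w$, may have no edge to $P(u,w)$ either. I expect the clean way to handle this is through an extremal choice: among all $s$-$t$ paths $P$ avoiding $\{a_1,\ldots,a_{m-1}\}$ with $(G-a_m)-V(P)$ connected, select one minimizing $|V(P)\cap N(a_m)|$ and then, lexicographically, $|X|$, and couple this with a careful selection of $u$, $w$, and $Q$ — for example choosing $Q$ to consume all of $X$ whenever possible, or exploiting the $(2m+2)$-connectivity of $G$ to force every component of $X\setminus V(Q)$ to attach to $P(u,w)$. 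Verifying that such a modification strictly improves the extremal parameter, so that the hard case cannot persist, is where the real technical work will lie.
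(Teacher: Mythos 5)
Your sketch does not constitute a proof: the entire difficulty of the theorem is concentrated in the ``hard case'' you describe ($N(a_m)\subseteq V(P)$ and $X\neq\emptyset$), and you leave it open. The reroute $P'=P[s,u]+Q+P[w,t]$ can strand components of $X\setminus V(Q)$ whose only attachments are to $P[s,u]\cup Q\cup P[w,t]$, i.e.\ to the new path itself, so $G-V(P')$ need not be connected; worse, $(G-a_m)-V(P')$ need not be connected either, so $P'$ may not even lie in the family over which your extremal parameter $(|V(P)\cap N(a_m)|,|X|)$ is defined, and no argument is given that any admissible modification strictly decreases it. This is not a routine verification you can defer: the rerouting obstruction is of linkage type, and already the case $m=2$ (connectivity $6$) historically required Jung's machinery rather than a reduction to Tutte's $m=1$ case, which is strong evidence that ``delete $a_m$ and induct'' cannot be closed by local surgery on $P$.

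For comparison, the paper does not prove this statement at all; it is quoted from Du, Li, Xie and Yu, and the route taken there (visible in the characterization reproduced as Theorem 2.2 of this paper) is structurally different: one proves that an $m$-rooted graph $(G,\{a_1,\ldots,a_m\},b_1,b_2)$ is either \emph{feasible} or admits an $\{a_1,\ldots,a_m,b_1,b_2\}$-collection $\mathcal{X}$ with $|N_G(X)|\leq m+1$ for all $X\in\mathcal{X}$ and $e(\mathcal{G}/\mathcal{X})\leq (m+1)v(G/\mathcal{X})-m^2/2-3m/2-1$, a Seymour-type alternative proved by contracting/merging roots rather than deleting them. Under $(2m+2)$-connectivity such a collection is excluded by an edge count (exactly as in the proof of Corollary 2.1 here), so the rooted graph is feasible, and a lexicographic maximality argument (as in Section 4 of this paper) upgrades ``$G-V(P)$ has a component containing $\{a_1,\ldots,a_m\}$'' to ``$G-V(P)$ connected.'' If you want to salvage your write-up, you need either to supply the missing rerouting argument in full (which amounts to reproving the linkage-type core) or to switch to the collection/edge-counting framework.
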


%

Our first result gives some progress towards Question \ref{question2} when $l=2$  and $k=1,2$.
 \begin{theorem}\label{theorem7}
For any positive integer $m$ and $k=1,2$, we have $f(m,k,2)\leq 2m+2k+2$.
\end{theorem}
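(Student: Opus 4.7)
We plan to prove Theorem~\ref{theorem7} by induction on $m$, with the base case $m=1$ handled separately.

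For the inductive step, assume the statement for $m-1$ with the same $k\in\{1,2\}$. Let $G$ be $(2m+2k+2)$-connected with distinct vertices $a_1,\ldots,a_m,s,t$, and put $G' := G - a_m$; then $\kappa(G') \geq 2m+2k+1$, one more than the $(2(m-1)+2k+2)$-connectivity demanded by the induction hypothesis. Applying the hypothesis in $G'$ to the avoidance set $\{a_1,\ldots,a_{m-1}\}$ and terminals $s,t$ yields internally disjoint $s$-$t$ paths $P_1,\ldots,P_k$ in $G'$ with $G' - \bigcup_i V(P_i)$ 2-connected; these paths automatically avoid $a_m$ as well. The remaining task is to show that $G - \bigcup_i V(P_i)$, which is obtained from the 2-connected graph $G' - \bigcup_i V(P_i)$ by re-attaching $a_m$, is itself 2-connected---equivalently, that $a_m$ has at least two neighbors outside $\bigcup_i V(P_i)$. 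We intend to enforce this by an extremal choice: among all valid path systems, select one minimizing $|N_G(a_m) \cap \bigcup_i V(P_i)|$. If at most one neighbor of $a_m$ lies off the paths, then at least $2m+2k+1$ of its neighbors are trapped on the paths, and we aim to exhibit a vertex cut of $G$ of size $\leq 2m+2k+1$ (assembled from $a_m$, the lone off-path neighbor, and a few internal path vertices separating two trapped neighbors), contradicting the $(2m+2k+2)$-connectivity.

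The base case $m=1$ is more delicate. For $k=1$, $G$ is $6$-connected and $G-a_1$ is $5$-connected, so the Chen-Gould-Yu--Kriesell theorem \cite{Chen,Kriesell} supplies an $s$-$t$ path $P$ in $G-a_1$ with $(G-a_1)-V(P)$ 2-connected, and the same extremal/rerouting device controls the number of $a_1$'s neighbors on $V(P)$. For $k=2$, however, $G$ is $8$-connected while $G-a_1$ is only $7$-connected, strictly below Kriesell's threshold $f(0,2,2)\leq 8$, so we cannot start from any known result about $G-a_1$. Here the plan is to adapt Kriesell's extremal two-path proof directly to the $8$-connected graph $G$, but with the added constraints that (i) both paths lie in $G-a_1$ and (ii) at least two neighbors of $a_1$ remain outside $V(P_1)\cup V(P_2)$; the two extra units of connectivity that $G$ has over the hypothetical value $2k+2=6$ at $m=0$ are precisely what we intend to spend on these two new constraints.

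I expect the $k=2$ base case to be the principal technical obstacle: carrying out Kriesell's two-path extremal argument inside a graph with a distinguished ``protected'' vertex $a_1$ appears to require a reworking of the original proof rather than a black-box application. A single rerouting lemma---converting any path system that nearly envelopes $a_m$ (or $a_1$) into a vertex cut of $G$ of size less than $2m+2k+2$---should unify the inductive step and the $k=2$ base case, and pinning down the right formulation of this lemma is the main thing to get right.
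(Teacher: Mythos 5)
Your proposal is a plan with two genuine gaps, and both sit exactly where the real difficulty of the theorem lies. First, the inductive step hinges on an unproven ``rerouting/cut'' lemma: you assume that if, after an extremal choice minimizing $|N_G(a_m)\cap\bigcup_i V(P_i)|$, at most one neighbour of $a_m$ lies off the paths, then a vertex cut of size at most $2m+2k+1$ can be assembled from $a_m$, the lone off-path neighbour and a few internal path vertices. No such cut is forthcoming from this data alone: the subpath between two consecutive trapped neighbours of $a_m$ may have arbitrarily many neighbours on other parts of the paths and in $G-a_m-\bigcup_i V(P_i)$, so its boundary need not be small; and any attempt to reroute a path off a neighbour of $a_m$ can destroy the $2$-connectivity of the leftover graph, so the minimality of $|N_G(a_m)\cap\bigcup_i V(P_i)|$ gives no contradiction by itself. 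Controlling precisely this interaction between rerouting and keeping the residual graph $2$-connected is what the paper's machinery is built for: the edge-counting characterization of $m$-rooted graphs (Theorem \ref{theorem8}, via Seymour's and Du--Li--Xie--Yu's results) first produces paths whose complement has a block containing the protected vertices, and then a lexicographic extremal argument on the sizes of that block and of the remaining components (as in Lemma \ref{lemma1} and the proof of Theorem \ref{theorem7}) absorbs the stray components using the connectivity; there is no induction on $m$ at the top level and no neighbour-counting cut of the kind you envisage.

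Second, your base case $m=1$, $k=2$ is explicitly left open: since $G-a_1$ is only $7$-connected, below the known bound $3k+2=8$ of Kawarabayashi--Ozeki (not Kriesell, whose result is the $k=1$, $l=2$ case), you propose to rework the two-path extremal proof inside an $8$-connected graph with a protected vertex. That reworking is essentially the content of the theorem for $m=1$, $k=2$, so at present the base case is assumed rather than proved. Until the rerouting lemma is formulated and proved, and the $k=2$ base case is carried out, the proposal does not constitute a proof.
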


%
%
%


Lov\'{a}sz's conjecture \cite{L} can also be phrased in terms of finding a cycle containing an arbitrary edge $e$ such that deleting the vertices of the cycle leaves the graph $l$-connected. Y. Hong, L. Kang and X. Yu \cite{Hong} thought that in potential applications one may need the cycle to avoid certain vertices, and posed the following conjecture.

\begin{conjecture}\label{conjecture1}
\cite{Hong} For any positive integers $m$ and $l$, there exists a smallest
positive integer $g(m, l)$ such that for any $g(m, l)$-connected graph $G$, any edge $e \in E(G)$, and any distinct vertices $a_1,\ldots , a_m$ of $G-V(e)$, there exists an induced cycle $C$ in $G- \{a_1,\ldots , a_m\}$
such that $e\in E(C)$ and $G- V(C)$ is $l$-connected.
\end{conjecture}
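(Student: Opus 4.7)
The plan is to establish Conjecture \ref{conjecture1} by reducing the induced-cycle problem to the path-avoidance statement of Theorem \ref{theorem7}. Writing $e=uv$, every cycle $C$ in $G$ through $e$ has the form $C=P+e$ for some $u$-$v$ path $P$ in $G-e$, and since $u,v\in V(C)$ we have $G-V(C)=(G-e)-V(P)$. Thus the requirement that $G-V(C)$ be $l$-connected is the same as asking $(G-e)-V(P)$ to be $l$-connected, and the avoidance condition $V(C)\cap\{a_1,\ldots,a_m\}=\emptyset$ becomes $V(P)\cap\{a_1,\ldots,a_m\}=\emptyset$. Enforcing $C$ to be an \emph{induced} cycle translates into two extra conditions: $P$ is an induced path of $G$, and no edge of $G$ joins $\{u,v\}$ to an internal vertex of $P$.

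For $l=2$, first ignore the inducedness condition. Since $G$ is $(2m+5)$-connected, $G-e$ is $(2m+4)$-connected, so applying Theorem \ref{theorem7} to $G-e$ with $k=1$ and $s=u,t=v$ yields a $u$-$v$ path $P$ in $G-e$ avoiding $\{a_1,\ldots,a_m\}$ with $(G-e)-V(P)$ 2-connected. Setting $C=P+e$ then gives a cycle through $e$, avoiding all $a_i$, with 2-connected complement; this already proves the (non-induced) cycle version advertised in the abstract and yields $g(m,2)\le 2m+5$ once the induced requirement is dropped. To obtain an induced $C$, the natural first attempt is to choose $P$ of minimum length among all paths satisfying (i) $V(P)\cap\{a_1,\ldots,a_m\}=\emptyset$ and (ii) $(G-e)-V(P)$ is 2-connected, and hope that $C=P+e$ is automatically induced: any chord of $C$ would produce a strictly shorter cycle $C'$ through $e$, and if $G-V(C')$ were 2-connected this would contradict the minimality of $P$.

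The main obstacle is that a chord short-circuit can destroy the 2-connectivity of the complement: when $C$ is shrunk along a chord, the bypassed interior vertices return to $G-V(C')$ and may attach so poorly that a cut vertex appears, so minimality alone does not force $C$ to be induced. A cleaner strategy is therefore to strengthen Theorem \ref{theorem7} itself, requiring that the path $P$ it produces be an \emph{induced} path of $G$ on which $\{u,v\}$ has no extra neighbours; this is analogous to the induced-path refinements underlying the proofs in \cite{Chen,Kriesell} that $f(1,2)=5$, and seems achievable for $l=2$ by reworking the inductive framework of Theorem \ref{theorem7} with care given to contractible non-separating edges, possibly at the cost of a larger constant in $g(m,2)$. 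For $l\ge 3$, Conjecture \ref{conjecture1} is currently out of reach, because the underlying path statement (Question \ref{question2}) is itself open for $l\ge 3$; a complete resolution $g(m,l)<\infty$ would first require general upper bounds on $f(m,k,l)$, which do not yet exist.
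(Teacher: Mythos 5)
You have not proved the statement, but neither does the paper: Conjecture \ref{conjecture1} is quoted from \cite{Hong} and remains open in general. What the paper actually claims is only the case $l=2$ (the bound $g(m,2)\le 2m+5$), and it obtains it by exactly the reduction you describe: for $e=st$ the graph $G-e$ is $(2m+4)$-connected, Theorem \ref{theorem7} with $k=1$ gives an $s$-$t$ path $P$ in $G-e$ avoiding $a_1,\ldots,a_m$ with $(G-e)-V(P)$ $2$-connected, and $C=P+e$ is the desired cycle. So your $l=2$ derivation coincides with the paper's, and your assessment that $l\ge 3$ is out of reach because the underlying path problem (Question \ref{question2}) is open matches the paper's own discussion; there is no proof of the full conjecture to compare against.

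The substantive issue is the word ``induced'', and here your criticism applies to the paper as much as to your sketch: the abstract and the statement $g(m,2)\le 2m+5$ silently drop ``induced'', and the reduction to Theorem \ref{theorem7} gives no control over chords of $P+e$. You are also right that choosing $P$ of minimum length does not help, since shortcutting along a chord returns the bypassed vertices to the complement and may create a cut vertex. Where your proposal overshoots is in concluding that Theorem \ref{theorem7} itself must be strengthened: the paper's own extremal device already supplies the missing step. In the proof of $f(m,1,2)\le 2m+4$ the path $P$ is chosen so that $(v(B),v(B_1),\ldots,v(B_t))$ is lexicographically maximal; if $C=P+e$ had a chord (including an edge from $b_1$ or $b_2$ to an interior vertex of $P$ other than $e$), shortcutting along it produces another admissible $b_1$-$b_2$ path whose complement strictly contains the previous one, so the block containing $\{a_1,\ldots,a_m\}$ cannot shrink and the component sequence strictly increases in lexicographic order, contradicting the choice. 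This is precisely the mechanism behind Claim \ref{claim3} inside Lemma \ref{lemma1} (asserted there without proof, and only for the two-edge setting); spelling it out for $k=1$ is what the induced version of the $l=2$ case requires, and on this point your proposal is more candid than the paper, which never addresses it.
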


Theorem \ref{theorem2} implies $g(m,1)\leq 2m+3$ (set $e
=st$, and then $G-V(e)$ is  $(2m+2)$-connected). Our Theorem 1.3  establishes Conjecture \ref{conjecture1} for $l=2$, improves the bound $10m+11$ in \cite{Hong}.

\begin{theorem}
For any positive integer $m$, $g(m,2)\leq 2m+5$.
\end{theorem}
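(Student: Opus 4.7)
The plan is to deduce the bound $g(m,2)\le 2m+5$ directly from Theorem 1.3 in the case $k=1$, by realizing a cycle through the prescribed edge $e$ as a single $s$-$t$ path together with $e$ itself. The only work will be the standard ``$G-e$'' trick that absorbs the $+1$ loss of connectivity incurred by forcing the path to avoid $e$.

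Let $e=st$, and set $G':=G-e$. Since deleting one edge drops the connectivity by at most one, $G'$ is $(2m+4)$-connected, which is exactly the hypothesis of Theorem 1.3 with $k=1$. Because $a_1,\dots,a_m\in V(G)\setminus V(e)$, the vertices $a_1,\dots,a_m,s,t$ are pairwise distinct, so Theorem 1.3 applied to $G'$ produces an $s$-$t$ path $P$ in $G'$ with $\{a_1,\dots,a_m\}\cap V(P)=\emptyset$ and $G'-V(P)$ 2-connected.

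I now set $C:=P+e$. Since $P$ lives in $G-e$, the edge $e$ is not used by $P$, hence $C$ is a genuine cycle of $G$ containing $e$, and $V(C)=V(P)$ is disjoint from $\{a_1,\dots,a_m\}$. For the 2-connectivity of $G-V(C)$, the point to record is that $\{s,t\}\subseteq V(P)$, so removing $V(P)$ from $G$ already removes the edge $e$; consequently $G-V(C)=G-V(P)=G'-V(P)$, which is 2-connected by the preceding paragraph. This gives the desired cycle and establishes $g(m,2)\le 2m+5$.

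I do not expect any real obstacle: all the depth is packed into Theorem 1.3, and the only calibration is the one-connectivity loss when passing from $G$ to $G-e$, which is precisely what forces the bound $2m+5$ rather than $2m+4$. Note also that this is structurally the same reduction by which the authors observe $g(m,1)\le 2m+3$ from Theorem 1.1, so the argument should be presented briefly as a corollary of Theorem 1.3.
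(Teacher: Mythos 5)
Your proof is correct and is exactly the paper's own (implicit) argument: the paper treats Theorem 1.4 as an immediate consequence of Theorem 1.3 with $k=1$ via the same $G-e$ reduction it sketches for $g(m,1)\le 2m+3$, and your checks that $P$ must have an internal vertex and that $G-V(C)=(G-e)-V(P)$ are all that the deduction needs. One caveat you share with the paper: Conjecture 1.2 defines $g(m,l)$ via an \emph{induced} cycle, and neither your $C=P+e$ nor the paper's is shown to be induced (Theorem 1.3 does not give induced paths), so both arguments establish the bound only in the non-induced form stated in the abstract.
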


As a natural generalization of   Conjecture \ref{conjecture1}, the last result in our paper is the following:

\begin{theorem}\label{theorem3}
Let $m$ be a positive integer and let $G$ be a $(2m+8)$-connected graph. For two distinct edges $\{e_1,e_2\}\subseteq E(G)$, and any distinct vertices $a_1,\ldots , a_m$ of $G-V(\{e_1,e_2\})$, there exists a cycle $C$ in $G- \{a_1,\ldots , a_m\}$ such that $\{e_1,e_2\}\subseteq E(C)$ and $G-V(C)$ is connected.
\end{theorem}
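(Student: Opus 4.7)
The plan reduces Theorem~\ref{theorem3} to Theorem~\ref{theorem7} via a short case analysis combined with an edge-contraction trick.

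In Case~1, where $V(e_1) \cap V(e_2) \neq \emptyset$, one may assume $t_1 = s_2 =: u$; then every cycle through $e_1, e_2$ has the form $C = e_1 \cup e_2 \cup P$ for some $t_2$-$s_1$ path $P$ in $G$ avoiding $u$ and the $a_i$'s. I would apply Theorem~\ref{theorem7} with $k = 1$ and $l = 2$ to $G$, blocking $\{u, a_1, \ldots, a_m\}$ (size $m+1$, requiring connectivity $2(m+1) + 4 = 2m+6 \leq 2m+8$) and taking terminals $t_2$ and $s_1$. The resulting $P$ has $G - V(P)$ 2-connected, so $G - V(C) = (G - V(P)) - \{u\}$ is connected, since removing one vertex from a 2-connected graph leaves a connected graph.

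In Case~2, where $V(e_1) \cap V(e_2) = \emptyset$ and $e_1 = s_1 t_1, e_2 = s_2 t_2$, the cycle decomposes as $C = e_1 \cup e_2 \cup P_1 \cup P_2$ for two internally disjoint paths $P_1, P_2$ in $G - e_1 - e_2$ whose endpoints pair up the elements of $\{s_1, t_1\}$ with those of $\{s_2, t_2\}$; this is essentially a 2-linkage problem. To apply Theorem~\ref{theorem7} with $k = 2$, I would contract both edges to form $H = G / e_1 / e_2$, writing $w_1, w_2$ for the images of $e_1, e_2$. A standard lift-of-cuts argument shows $\kappa(H) \geq \kappa(G) - 2 \geq 2m + 6$. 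Applying Theorem~\ref{theorem7} with $k = 2, l = 2$ to $H$ (terminals $w_1, w_2$; blocked set $\{a_1, \ldots, a_m\}$) yields two internally disjoint $w_1$-$w_2$ paths $Q_1, Q_2$ with $H - V(Q_1 \cup Q_2)$ 2-connected. In the ``good'' lifting case, the two $H$-edges at $w_1$ used by $Q_1, Q_2$ come from opposite $G$-endpoints of $e_1$, and similarly at $w_2$; the lifted paths then combine with $e_1, e_2$ into a single cycle $C$ in $G$ with $G - V(C)$ connected (inheriting the 2-connectedness of $H - V(Q_1 \cup Q_2)$).

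The main obstacle is the ``bad lifting'' case, where both edges at some $w_i$ come from the same $G$-endpoint of $e_i$, so that the other endpoint (say $t_1$) does not lie on either lifted path and $e_i$ cannot be inserted directly. To handle this, I would exploit the 2-connected residual $H - V(Q_1 \cup Q_2)$, which still contains $t_1$ with many $G$-neighbors, and use Menger within the residual to reroute a short segment of one lifted path through $e_i$ and the missing endpoint, thereby inserting the missing edge while preserving connectedness of the new residual and avoiding the $a_i$'s. A symmetric fix handles bad lifting at $w_2$. The $+2$ slack in the connectivity of $G$ beyond the $2m + 6$ used by Theorem~\ref{theorem7} in $H$ is precisely what provides enough room for these local reroutings to work around the blocked vertices.
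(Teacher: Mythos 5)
Your Case~1 is fine, and in Case~2 the ``good lifting'' half is fine, but the ``bad lifting'' case is not a technicality you can wave away with Menger --- it is precisely the hard core of the theorem, and your sketch does not resolve it. Two concrete problems. First, your rerouting plan runs a detour from $t_1$ through the residual $R=H-V(Q_1\cup Q_2)$ and back onto a lifted path, and then asserts the new residual stays connected. But deleting the vertices of an arbitrary detour from a $2$-connected graph can disconnect it arbitrarily badly; guaranteeing a \emph{non-separating} detour is exactly the same kind of problem the whole paper is about, so this step cannot be discharged by Menger's theorem alone (and the ``$+2$ slack'' in connectivity does not help, since Theorem~\ref{theorem7} was already consumed inside $H$ and gives you no control over which residual vertices are expendable). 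Second, the premise ``$t_1$ still has many $G$-neighbours in the residual'' is unjustified: all neighbours of $t_1$ other than those in $A$ may lie on $V(Q_1\cup Q_2)$, and the reroute must avoid $A$, so $t_1$ may have no usable entry point into $R$ at all. More structurally, what Case~2 really needs is a prescribed non-separating $2$-linkage (an $s_1$--$s_2$ path and a disjoint $t_1$--$t_2$ path, or the cross pairing), and Theorem~\ref{theorem7} only provides a fan between a single pair of terminals; contracting $e_1,e_2$ erases exactly the pairing information, which is why the bad case arises and cannot be excluded.

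The paper avoids this trap by not using Theorem~\ref{theorem7} here at all. It proves Corollary~\ref{corollary2} directly from the rooted-graph characterization (Theorem~\ref{theorem8}): before contracting $e_i$ it deletes all edges from its endpoints to non-common neighbours, so that any path through the contracted vertex is forced to lift through the edge $e_i$ itself. This surgery destroys high connectivity, which is precisely why an edge-density criterion such as Theorem~\ref{theorem8} (rather than a connectivity-threshold statement) is needed, together with Claims~\ref{claim4}--\ref{claim6} and an edge count to rule out the bad collections $\mathcal{X}$. That yields a cycle through $e_1,e_2$ whose complement has a component containing $A$, and then Lemma~\ref{lemma1} (a lexicographic-maximality rerouting argument) upgrades this to a cycle avoiding $A$ with connected complement. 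If you want to salvage your reduction, you would need either a lifting-control device of this kind or a genuine non-separating $2$-linkage theorem avoiding vertices, neither of which your current sketch supplies.
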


The rest of the paper is organized as follows. In Section \ref{section2}, we introduce notation, and we give a characterization of ``$m$-rooted graphs'' in Theorem \ref{theorem8}, which will be used in the proof of Theorems  \ref{theorem7} and \ref{theorem3}.  So, the proof of Theorem \ref{theorem8} is given in Section \ref{section3}. Theorems  \ref{theorem7} and \ref{theorem3} are proved in Section \ref{section4}.

\section{Preliminaries and tools}\label{section2}
\subsection{Basic terminology}

 For an integer $k$, we write $[k]$ for $\{1,\ldots,k\}$.  A \emph{block} of a graph $G$ is a maximal 2-connected subgraph of $G$ or an isolate vertex. For a graph $G$, we use $V(G)$, $E(G)$, $v(G)$, and $e(G)$ to denote its vertex set, edge set, the cardinality of its vertex set, and the cardinality of its edge set, respectively. For any $S\subseteq V(G)$, the subgraph of $G$ induced by $S$ is denoted by $G[S]$. Let $G-S =G[V\backslash S]$ and $N_G[S]=N_G(S)\cup S$.


We follow the notation of \cite{Du}.  Let $m\geq 0$ be an integer, and we call $(G, \{a_1,\ldots , a_m\}, b_1,b_2)$  an $m$-\emph{rooted  graph} if $G$ is a graph and $a_1,\ldots , a_m, b_1, b_2$ are distinct vertices of $G$ (note that $m=0$ means that  $\{a_1,\ldots , a_m\}=\emptyset$). An $m$-rooted graph $(G, \{a_1,\ldots , a_m\}, b_1, b_2)$ is said to be \emph{feasible} (resp. 2-\emph{feasible}) if $G$ contains a $b_1$-$b_2$ path $P$ and $G-V(P)$ has a component (resp. block) containing  $\{a_1,\ldots , a_m\}$. Further,  we call an $m$-rooted graph $(G, \{a_1,\ldots , a_m\}, b_1, b_2)$  (2,2)-\emph{feasible} if $\{a_1,\ldots , a_m\}$ and $\{b_1,b_2\}$ are contained in two disjoint blocks, respectively.
\begin{figure}[H]
	\centering
	\includegraphics[scale=0.28]{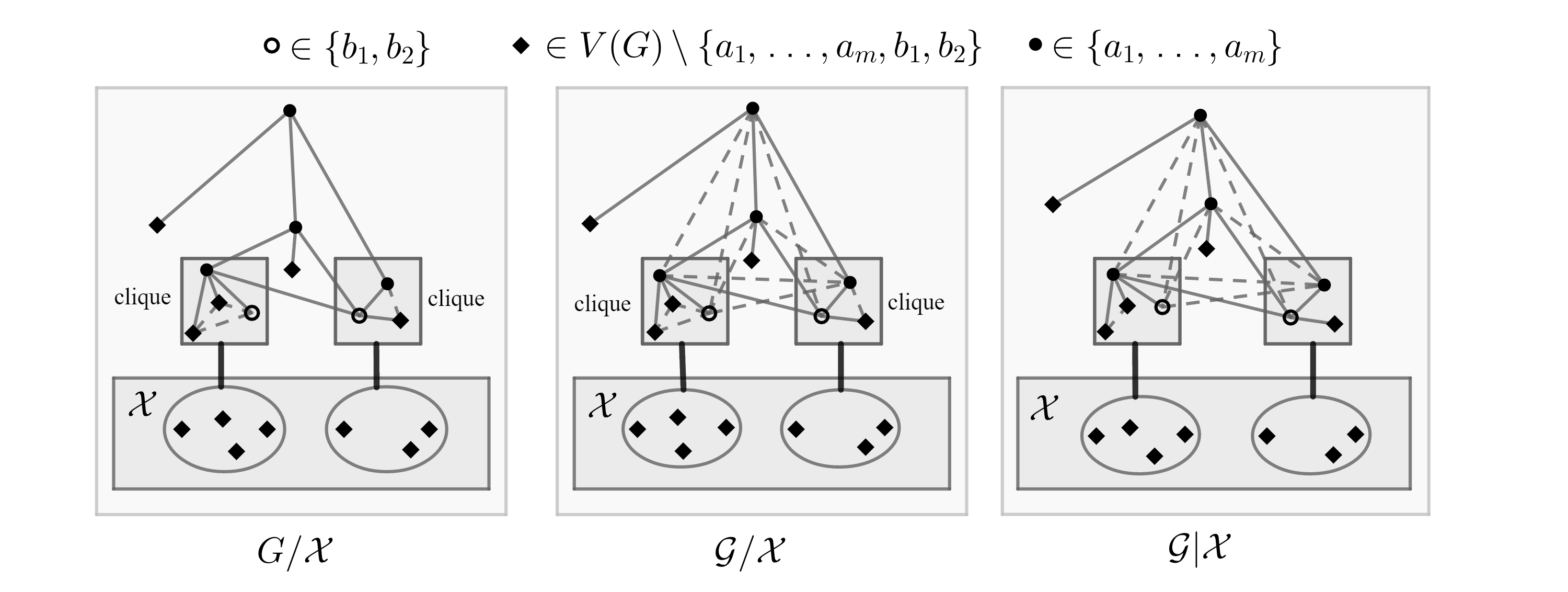}
	\caption{An example of $G/\mathcal{X}$, $\mathcal{G}/\mathcal{X}$ and $\mathcal{G}|\mathcal{X}$. Dashed lines indicate edges that do not belong to $E(G)$, but belong to  $E(G/\mathcal{X})$ or $E(\mathcal{G}/\mathcal{X})$ or $E(\mathcal{G}|\mathcal{X})$.}
	\label{fig1}
\end{figure}

Let $G$ be a graph and $\{s_1,\ldots , s_t\}\subseteq V(G)$, we say that $(G, s_1,\ldots , s_t)$  is \emph{planar} if $G$ can be drawn in a closed disc in the plane with no edge crossings such that $s_1,\ldots , s_t$ on the boundary of the disc in clockwise order. For any $S\subseteq V(G)$, an $\mathcal{S}$-\emph{collection} in $G$ is a collection $\mathcal{X}\subseteq V(G)\setminus S$ such that $N_G[X_1]\cap X_2 =\emptyset$ for distinct $X_1, X_2 \in \mathcal{X}$. We use $G/\mathcal{X}$ to denote the graph obtained from $G$ by, for each $X \in \mathcal{X}$, deleting $X$ and adding edges such that $G[N_G(X)]$ is a clique. For an $m$-rooted  graph $\mathcal{G}=(G, \{a_1,\ldots , a_m\}, b_1, b_2)$ and any $\{a_1,\ldots , a_m, b_1, b_2\}$-collection $\mathcal{X}$ in
$G$, let $\mathcal{G}/\mathcal{X}$ denote the graph obtained from $G/\mathcal{X}$ by adding an edge joining every pair of
distinct vertices in $\{a_1,\ldots , a_m, b_1, b_2\}$ except the pair in $\{b_1, b_2\}$, and let $\mathcal{G}|\mathcal{X}$ denote the graph obtained from $\mathcal{G}/\mathcal{X}$ by deleting all edges in
$\{uv\in E(\mathcal{G}/\mathcal{X})\setminus E(G): u\in V(G)\setminus \{a_1,\ldots , a_m, b_1, b_2\}, v\in \{a_1,\ldots , a_m, b_1, b_2\}\}$. An example of $G/\mathcal{X}$, $\mathcal{G}/\mathcal{X}$ and $\mathcal{G}|\mathcal{X}$ is shown in Fig. \ref{fig1}. Note that
\begin{center}
  $e(\mathcal{G}|\mathcal{X})\leq e(\mathcal{G}/\mathcal{X})$ and   $e(G/
  \mathcal{X})\leq e(\mathcal{G}/\mathcal{X})$.
\end{center}


\subsection{$m$-rooted graphs}
The following result conducted by Seymour \cite{P} gave a characterization of feasible 2-rooted graphs.

 \begin{theorem}\label{theorem1}
\cite{P} Let $(G, \{a_1,a_2\}, b_1, b_2)$ be a $2$-rooted graph. Then either $(G, \{a_1, a_2\}, b_1, b_2)$ is feasible, or there is some $\{a_1, a_2, b_1, b_2\}$-collection $\mathcal{X}$ in $G$ such that $|N_G(X)| \leq 3$ for all $X\in \mathcal{X}$ and $(G/\mathcal{X} , a_1, b_1, a_2, b_2)$ is planar.
\end{theorem}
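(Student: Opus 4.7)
The plan is to prove Theorem~\ref{theorem1} by induction on $|V(G)|+|E(G)|$, following the classical treatment of the $2$-linkage problem due to Seymour, Shiloach, and Thomassen. First, I would observe that $(G,\{a_1,a_2\},b_1,b_2)$ is feasible precisely when $G$ contains a pair of vertex-disjoint paths, one from $b_1$ to $b_2$ and one from $a_1$ to $a_2$: if $P$ is a $b_1$-$b_2$ path and $a_1,a_2$ lie in a common component of $G-V(P)$, then that component contains an $a_1$-$a_2$ path, and conversely two such disjoint paths certify feasibility. So Theorem~\ref{theorem1} is equivalent to the classical characterization of graphs admitting a $(b_1b_2,a_1a_2)$-linkage.

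The first step of the induction is the reduction by small separators. Assume $(G,\{a_1,a_2\},b_1,b_2)$ is infeasible and let $(A,B)$ be a separation of $G$ with $|A\cap B|\leq 3$ and $V(A)\setminus B$ containing none of $a_1,a_2,b_1,b_2$. By Menger's theorem, any pair of disjoint paths using vertices of $A\setminus B$ can be rerouted inside $A$ to paths using only the clique $K$ placed on $A\cap B$; hence the graph $G'$ obtained by deleting $V(A)\setminus B$ and making $G[A\cap B]$ a clique is still infeasible. Apply induction to get a collection $\mathcal{X}'$ for $G'$; then $\mathcal{X}=\mathcal{X}'\cup\{V(A)\setminus B\}$ works for $G$, since $|N_G(V(A)\setminus B)|\leq 3$ and any planar embedding of $(G'/\mathcal{X}',a_1,b_1,a_2,b_2)$ lifts to one of $(G/\mathcal{X},a_1,b_1,a_2,b_2)$ because $G/\mathcal{X}=G'/\mathcal{X}'$ as graphs. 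Analogous reductions handle non-terminal vertices of degree at most $3$ and parallel edges, so we may assume $G$ is essentially $4$-connected with every small separator having all four terminals split nontrivially across its sides.

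The core of the argument is then to show that this ``irreducible'' infeasible graph is planar with $a_1,b_1,a_2,b_2$ appearing in this clockwise order on a face. For this I would take a minimum-length $b_1$-$b_2$ path $P$, analyze the bridges of $P$ in $G$ together with $a_1,a_2$, and use Kuratowski-style exchange arguments: a $K_5$ or $K_{3,3}$ minor compatible with the four terminals would be shown to force a $(b_1b_2,a_1a_2)$-linkage, contradicting infeasibility; and the absence of such a minor, combined with $4$-connectedness, pins down a planar embedding in a disc with the terminals on the boundary in the required cyclic order $a_1,b_1,a_2,b_2$ (the order being dictated by the infeasibility, since other cyclic orderings admit a linkage by routing through the planar dual).

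The main obstacle is this last step: the reductions are essentially bookkeeping, but establishing that every $4$-connected infeasible $4$-terminal graph is planar with the prescribed cyclic order is the substantive content of Seymour's theorem and requires the careful bridge-analysis of \cite{P}. Since the statement is attributed to Seymour as Theorem~\ref{theorem1}, I expect the present paper to invoke it as a black box rather than reprove it, using it in later sections only to extract the $\mathcal{X}$-collection whose planar quotient is then examined for the target structural conclusions.
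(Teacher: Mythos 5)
The paper does not prove this statement at all: it is quoted verbatim from Seymour's paper \cite{P} and used as a black box (in Lemma~\ref{lemma2} and Lemma~\ref{lemma4}), exactly as you anticipate in your final paragraph. So there is no in-paper proof to compare against, and your reading of how the result functions in the paper is correct. Your preliminary observation --- that feasibility of $(G,\{a_1,a_2\},b_1,b_2)$ is equivalent to the existence of vertex-disjoint $b_1$-$b_2$ and $a_1$-$a_2$ paths, so the theorem is the classical $2$-linkage characterization --- is also correct and worth stating, since the paper's ``feasible'' phrasing obscures this.

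As a standalone proof, however, your write-up is an outline rather than an argument: the reduction by $\leq 3$-separators and by low-degree non-terminal vertices is indeed routine (though note that lifting a linkage from $G'$ back to $G$ requires checking that at most one of the two disjoint paths can enter $V(A)\setminus B$ when $|A\cap B|\leq 3$, and that the needed connection through $A\setminus B$ actually exists --- this is where the hypothesis that $V(A)\setminus B$ contains no terminal is used), but the decisive step --- that every irreducible infeasible instance is planar with the terminals in the cyclic order $a_1,b_1,a_2,b_2$ on a face --- is only named, not proved. That step is the entire substance of Seymour's theorem, and ``Kuratowski-style exchange arguments'' is a placeholder for several pages of bridge analysis. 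Since the statement is explicitly attributed to \cite{P}, this is an acceptable gap for the purposes of this paper, but you should not present the sketch as a proof; cite the result as the authors do.
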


The natural generalization of Theorem \ref{theorem1} to  $m$-rooted graphs,  proved by  X. Du, Y. Li, S. Xie and X. Yu \cite{Du} in 2023,  is the following:

 \begin{theorem}
\cite{Du} Let $m\geq 0$ be an integer, and let $(G, \{a_1,\ldots , a_m\}, b_1, b_2)$ be an $m$-rooted graph. Then either $(G, \{a_1,\ldots , a_m\}, b_1, b_2)$ is feasible, or there is some $\{a_1,\ldots , a_m, b_1,b_2\}$-collection $\mathcal{X}$ in $G$ such that $|N_G(X)| \leq m+1$ for all $X\in \mathcal{X}$, and
 $e(\mathcal{G/X})\leq (m+1)v(G/\mathcal{X})-m^2/2-3m/2-1$.
\end{theorem}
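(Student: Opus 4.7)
The plan is to proceed by induction on $m$. The base case is $m=2$, which is exactly Seymour's theorem (Theorem \ref{theorem1}): if $(G,\{a_1,a_2\},b_1,b_2)$ is not feasible, Seymour supplies an $\{a_1,a_2,b_1,b_2\}$-collection $\mathcal{X}$ with $|N_G(X)|\leq 3$ such that $(G/\mathcal{X},a_1,b_1,a_2,b_2)$ is planar with the four roots on the outer face in that cyclic order. I would then check that $\mathcal{G}/\mathcal{X}$ remains planar: the five added edges are exactly the edges of the complete graph on $\{a_1,a_2,b_1,b_2\}$ minus $b_1b_2$, and all five can be routed in the unbounded face without crossings precisely because the one diagonal $b_1b_2$ is omitted. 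Hence $e(\mathcal{G}/\mathcal{X})\leq 3v(G/\mathcal{X})-6$, which matches the bound $(m+1)v-m^2/2-3m/2-1$ at $m=2$. The degenerate cases $m\in\{0,1\}$ are handled by direct separation arguments on $b_1$-$b_2$ connectivity, possibly after removing $a_1$.

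For the inductive step, I would fix a minimal counterexample $(G,\{a_1,\dots,a_m\},b_1,b_2)$ and consider the $(m-1)$-rooted graph $H=(G,\{a_1,\dots,a_{m-1}\},b_1,b_2)$, splitting on its feasibility. If $H$ is not feasible, the inductive hypothesis supplies a collection $\mathcal{X}'$ with $|N_G(X)|\leq m$ and $e(H/\mathcal{X}')\leq mv(G/\mathcal{X}')-m(m+1)/2$; the task is to upgrade $\mathcal{X}'$ to an $\{a_1,\dots,a_m,b_1,b_2\}$-collection $\mathcal{X}$ by either absorbing $a_m$ into an existing $X\in\mathcal{X}'$, trimming an $X$ whose closure meets $a_m$, or introducing a small new set around $a_m$, while preserving the disjointness property $N_G[X_i]\cap X_j=\emptyset$. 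The slack between the two edge bounds is roughly $v(G/\mathcal{X})-(m+1)$, which is exactly enough to absorb the at most $m+1$ new edges incident to $a_m$ in $\mathcal{G}/\mathcal{X}$. If $H$ is feasible via some $b_1$-$b_2$ path $P$ with $\{a_1,\dots,a_{m-1}\}$ contained in a component $C$ of $G-V(P)$, then non-feasibility of the $m$-rooted graph forces $a_m\notin V(C)$ for every such choice; by a Menger-type argument applied between $a_m$ and $\{a_1,\dots,a_{m-1},b_1,b_2\}$ one extracts a vertex cut of size at most $m+1$ isolating $a_m$, which can be inflated to the desired collection $\mathcal{X}$.

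The main obstacle I anticipate is the quantitative edge-counting in the ``$H$ not feasible'' branch: the bound $(m+1)v-\binom{m+2}{2}$ is tight (realized when each $X\in\mathcal{X}$ sits behind a near-$K_{m+2}$ boundary), so every modification of $\mathcal{X}'$ must be accounted for exactly, with no wasted slack. A secondary subtlety is preserving the collection property $N_G[X_1]\cap X_2=\emptyset$ under merging or splitting, which I would enforce by exploiting minimality of the counterexample — edge-minimality should guarantee that each $G[X\cup N_G(X)]$ is ``tight'', with $N_G(X)$ spanning close to a clique, so that modifications do not inadvertently destroy the separation structure. Matching the exact quadratic constant $-m^2/2-3m/2-1$ rather than just asymptotic bounds is where the bulk of the technical work will lie.
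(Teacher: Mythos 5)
You have correctly identified the $m=2$ base case: Seymour's theorem plus the observation that the five root edges (all pairs except $b_1b_2$) can be added in the outer face of the planar drawing, giving $e(\mathcal{G}/\mathcal{X})\le 3v(G/\mathcal{X})-6$. This matches how the small cases are handled in the literature (and how the present paper treats its analogous Theorem \ref{theorem8} via Lemma \ref{lemma2} and Euler's formula). Note, however, that the paper does not prove the quoted statement at all — it is imported from \cite{Du} — so the comparison below is with the inductive scheme actually used there and mirrored in the paper's proof of Theorem \ref{theorem8}.

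The inductive step of your proposal has genuine gaps. In the branch where $H=(G,\{a_1,\dots,a_{m-1}\},b_1,b_2)$ is feasible, your plan to extract, by Menger, a cut of size at most $m+1$ separating $a_m$ from $\{a_1,\dots,a_{m-1},b_1,b_2\}$ is not justified and is false in general: non-feasibility of the $m$-rooted graph does not bound the connectivity between $a_m$ and the other roots (non-feasible instances can be roughly $2m$-connected, in which case $H$ is automatically feasible by the $(2m)$-connectivity theorem of \cite{Du} while no cut of size $m+1$ exists). Moreover, even when such a cut exists it only gives local structure near $a_m$; the second outcome requires a \emph{global} bound $e(\mathcal{G}/\mathcal{X})\le (m+1)v(G/\mathcal{X})-\binom{m+2}{2}$, and $a_m$, being a root, cannot be placed inside any $X\in\mathcal{X}$, so "inflating the cut" does not produce a valid collection. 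In the branch where $H$ is not feasible, the arithmetic is not "exactly enough": the slack between the two targets is $v(G/\mathcal{X})-(m+1)$, while up to $m+1$ new root edges at $a_m$ appear, so the count fails whenever $v(G/\mathcal{X})<2m+2$; and the proposed modifications of $\mathcal{X}'$ (absorbing, trimming, adding a set around $a_m$) are left unspecified precisely where the bound is tight. The working induction (in \cite{Du}, and in this paper's proof of Theorem \ref{theorem8}) reduces $m$ to $m-1$ differently: delete all edges from $a_1,a_2$ to their non-common neighbours and contract $a_1,a_2$ onto a single root $a^*$. Feasibility of the contracted $(m-1)$-rooted graph then pulls back to the original (every neighbour of $a^*$ is a common neighbour of $a_1,a_2$), and the edge count lifts cleanly because $e(\mathcal{G}\,\text{(lifted)})\le e(\mathcal{G}'\,\text{(contracted)})+v-1$, the doubly counted edges at common neighbours already being present in the contracted graph. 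Your "drop $a_m$" reduction lacks both of these features, and that is where the proof breaks.
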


%

 Now we are able to give a characterization of $m$-rooted graphs, which will be used in the proof of Theorems \ref{theorem7} and \ref{theorem3}.


%
%
%

\begin{theorem}\label{theorem8}
Let $m\geq 0$ be an integer, and let $(G, \{a_1,\ldots,a_m\}, b_1,b_2)$ be an $m$-rooted graph of order at least $m+3$. Suppose that there is no $\{a_1,\ldots,a_m, b_1,b_2\}$-collection $\mathcal{X}$ in $G$ such that $|N_G(X)| \leq m+2$ for all $X\in \mathcal{X}$, and $e(\mathcal{G}| \mathcal{X})\leq (m+2)v(\mathcal{G}|\mathcal{X})-m^2/2-5m/2-4$. Then

(i) either $a_1,\ldots,a_m, b_1,b_2$ have a common neighbour in $G-\{a_1,\ldots,a_m, b_1,b_2\}$, or $(G, \{a_1,\ldots,$\\$a_m\}, b_1,b_2)$ is 2-feasible.

(ii) either $a_1,\ldots,a_m$ have a common neighbour in $G-\{a_1,\ldots,a_m, b_1,b_2\}$, or $(G, \{a_1,\ldots,a_m\},$\\$ b_1,b_2)$ is (2,2)-feasible.
\end{theorem}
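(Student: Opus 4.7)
The plan is to prove Theorem~\ref{theorem8} by induction on $v(G)$, bootstrapping from Theorem~2.2 of \cite{Du} (the feasible-vs-collection dichotomy for $m$-rooted graphs). Part~(i) is handled in detail, and part~(ii) is obtained by applying (i) twice after a suitable block-deletion.

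First I would check that the hypothesis of Theorem~\ref{theorem8} already rules out the forbidden collections appearing in Theorem~2.2 of \cite{Du}: any $\{a_1,\ldots,a_m,b_1,b_2\}$-collection $\mathcal{X}$ with $|N_G(X)|\le m+1$ and $e(\mathcal{G}/\mathcal{X})\le (m+1)v(G/\mathcal{X})-m^2/2-3m/2-1$ automatically satisfies $|N_G(X)|\le m+2$ and, using $e(\mathcal{G}|\mathcal{X})\le e(\mathcal{G}/\mathcal{X})$ together with $v(\mathcal{G}|\mathcal{X})\ge m+3$, also satisfies $e(\mathcal{G}|\mathcal{X})\le (m+2)v(\mathcal{G}|\mathcal{X})-m^2/2-5m/2-4$. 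Hence Theorem~2.2 of \cite{Du} supplies a $b_1$-$b_2$ path $P^*$ such that $G-V(P^*)$ has a component $H$ with $\{a_1,\ldots,a_m\}\subseteq V(H)$. For (i), I would fix a feasible $P^*$ maximizing $v(H)$. If some block of $H$ already contains $\{a_1,\ldots,a_m\}$ then (i) holds; otherwise the block-tree of $H$ exhibits a cut vertex $c\in V(H)$ separating two of the $a_i$'s, and I let $H_1$ be a component of $H-c$ missing at least one $a_j$.

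The heart of the argument is to build from $(P^*,H,c,H_1)$ an $\{a_1,\ldots,a_m,b_1,b_2\}$-collection $\mathcal{X}$ that violates the hypothesis of Theorem~\ref{theorem8}. I would take $V(H_1)$ together with a grouping of the internal vertices of $P^*$ into segments whose $G$-neighbourhoods are controlled by the $a_i$'s on the side of $c$ opposite to $H_1$, together with $c$ itself. The maximality of $v(H)$ forces $|N_G(X)|\le m+2$ for each $X\in\mathcal{X}$ (the ``budget'' being the opposite $a_i$'s, $c$, and one anchor point on $P^*$), while an Euler-type count on $\mathcal{G}|\mathcal{X}$, which is sparser than $\mathcal{G}/\mathcal{X}$ because the new edges joining $\{a_1,\ldots,a_m,b_1,b_2\}$ to the rest of $G$ are deleted, yields exactly $e(\mathcal{G}|\mathcal{X})\le (m+2)v(\mathcal{G}|\mathcal{X})-m^2/2-5m/2-4$, contradicting the hypothesis. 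The only escape is the common-neighbour conclusion in (i).

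For (ii), assuming the $a_i$'s have no common neighbour in $G-\{a_1,\ldots,a_m,b_1,b_2\}$, I would combine (i) with a second application of (i) on a reduced graph: first produce a $b_1$-$b_2$ path $P^*$ and a block $B$ of $G-V(P^*)$ containing all $a_i$'s, then lift $B$ to a block $B^*$ of $G$, pick a cut vertex $u$ of $G$ on the block-tree path from $B^*$ toward $\{b_1,b_2\}$, and apply (i) to the $1$-rooted graph $(G-(V(B^*)\setminus\{u\}),\{u\},b_1,b_2)$; the density hypothesis transfers because deleting interior vertices of a block of $G$ reduces $e(\mathcal{G}|\mathcal{X})$ at least as fast as $v(\mathcal{G}|\mathcal{X})$. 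This produces a block $B'$ of the reduced graph (disjoint from $B^*$ by construction) that absorbs both $b_1$ and $b_2$, giving (2,2)-feasibility. The main obstacle throughout is the Euler-type bookkeeping for $\mathcal{G}|\mathcal{X}$: the constants $m^2/2+5m/2+4$ must be matched exactly, and the base cases $v(G)=m+3$ and $m\in\{0,1\}$ need to be handled uniformly. I expect the bulk of the technical effort to lie in that step, while the reduction from (i) to (ii) is comparatively routine once the right block-tree surgery is chosen.
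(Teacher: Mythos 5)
There is a genuine gap, and it sits exactly at the heart of your argument for (i). The theorem of Du, Li, Xie and Yu that you bootstrap from only yields \emph{feasibility} (a $b_1$-$b_2$ path $P^*$ whose removal leaves a \emph{component} $H$ containing all $a_i$), and the entire difficulty of Theorem \ref{theorem8} is the upgrade from ``component'' to ``block''. Your mechanism for this upgrade is not an argument: you assert that, after choosing $P^*$ to maximize $v(H)$ and splitting $H$ at a cut vertex $c$, a collection built from $V(H_1)$ and ``segments of $P^*$'' satisfies $|N_G(X)|\le m+2$, and that ``an Euler-type count'' gives $e(\mathcal{G}|\mathcal{X})\le (m+2)v(\mathcal{G}|\mathcal{X})-m^2/2-5m/2-4$. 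Neither claim has any support. Maximality of $v(H)$ says nothing about the neighbourhoods of internal segments of $P^*$, which can attach to arbitrarily many vertices of $H$, of the other components, and of each other, so the ``budget'' of $m+2$ does not materialize; and there is no planar structure anywhere in your construction on which an Euler bound could be based. In the paper the Euler count is available only because Seymour's theorem (Theorem \ref{theorem1}) is invoked and produces a \emph{planar} graph $(G'/\mathcal{X},\cdot,\cdot,\cdot,\cdot)$; the paper's proof is an induction on $m$ (contracting $a_1,a_2$ after deleting all edges to non-common neighbours, so that a common neighbour of the contracted root pulls back to a common neighbour of all roots), whose base cases $m\le 2$ are settled precisely by that planarity plus the edge count $e\le 3v-6$. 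Your proposal never uses Theorem \ref{theorem1} or any substitute for it, so the quantitative hypothesis of Theorem \ref{theorem8} is never actually brought into contradiction.

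Two further points. First, your reduction of (ii) to (i) relies on the claim that ``the density hypothesis transfers because deleting interior vertices of a block reduces $e(\mathcal{G}|\mathcal{X})$ at least as fast as $v(\mathcal{G}|\mathcal{X})$''; this points the wrong way. What you need is that a sparse collection in the reduced $1$-rooted graph (with its own constants, $|N|\le 3$ and $e\le 3v-7$) lifts to a collection in $G$ violating the $m$-rooted hypothesis, and re-inserting the deleted block interior adds edges at a rate that is not controlled by $m+2$ per vertex, so no such monotonicity holds; the paper instead deduces (ii) alongside (i) inside the same induction, with the $m\le 2$ cases done by hand. Second, even your opening step, that any collection forbidden by Theorem \ref{theorem8} subsumes those of the Du et al.\ theorem, uses $v(\mathcal{G}|\mathcal{X})\ge m+3$, which fails when $\mathcal{X}$ exhausts $V(G)\setminus\{a_1,\ldots,a_m,b_1,b_2\}$ (then $v(\mathcal{G}|\mathcal{X})=m+2$), so that comparison needs a separate case analysis rather than the one-line inequality you give.
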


%

Due to Theorem \ref{theorem8}, we have the following results.

\begin{corollary} \label{corollary1}
Let $m\geq 0$ be an integer, and let $(G, \{a_1,\ldots , a_m\}, b_1, b_2)$ be an $m$-rooted graph. Then the following statements hold.

(i) If $G$ is $(2m+4)$-connected, then $(G, \{a_1,\ldots , a_m\}, b_1,b_2)$ is 2-feasible.

(ii) If $G$ is $(2m+5)$-connected, then $(G, \{a_1,\ldots , a_m\}, b_1,b_2)$ is (2,2)-feasible.
\end{corollary}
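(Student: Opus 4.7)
The plan is to deduce both parts of Corollary \ref{corollary1} from Theorem \ref{theorem8}, applied to the $m$-rooted graph $(G,\{a_1,\ldots,a_m\},b_1,b_2)$. The scheme has two steps: first, verify that the ``no small-boundary collection'' hypothesis of Theorem \ref{theorem8} is forced automatically by $\kappa(G)\geq 2m+4$; then handle the ``common neighbour'' alternatives in the two conclusions by explicit constructions.

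For the hypothesis check, suppose for contradiction that there is an $\{a_1,\ldots,a_m,b_1,b_2\}$-collection $\mathcal{X}$ with $|N_G(X)|\leq m+2$ for every $X\in\mathcal{X}$. Since $\{a_1,\ldots,a_m,b_1,b_2\}\subseteq V(G)\setminus X$ and $m+2<\kappa(G)$, for each non-empty $X\in\mathcal{X}$ the set $V(G)\setminus(X\cup N_G(X))$ must be empty (otherwise $N_G(X)$ would be a cutset of size strictly smaller than $\kappa(G)$); this forces $N_G(X)=\{a_1,\ldots,a_m,b_1,b_2\}$ and $X=V(G)\setminus\{a_1,\ldots,a_m,b_1,b_2\}$, and the collection axiom $N_G[X_1]\cap X_2=\emptyset$ then allows at most one such non-empty $X$. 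A direct computation identifies $\mathcal{G}|\mathcal{X}$ with $K_{m+2}$ in that case, so $e(\mathcal{G}|\mathcal{X})=\tfrac{m^2+3m+2}{2}$ strictly exceeds the threshold $(m+2)v(\mathcal{G}|\mathcal{X})-\tfrac{m^2}{2}-\tfrac{5m}{2}-4=\tfrac{m^2+3m}{2}$; if instead $\mathcal{X}$ has no non-empty element, the inclusion $\mathcal{G}|\mathcal{X}\supseteq G$ together with $e(G)\geq (m+2)v(G)$ (degree sum, using $\delta(G)\geq 2m+4$) again violates the threshold. Hence no such $\mathcal{X}$ exists, and Theorem \ref{theorem8} applies in both (i) and (ii).

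For part (i), Theorem \ref{theorem8}(i) either yields $2$-feasibility directly or supplies a common neighbour $v$ of $a_1,\ldots,a_m,b_1,b_2$ outside the roots. In the latter case I would take $P=b_1 v b_2$; then $P$ is a $b_1$-$b_2$ path and $G-V(P)$ is $(2m+1)$-connected, so for $m\geq 1$ it is a single $2$-connected block containing $\{a_1,\ldots,a_m\}$, and for $m=0$ the containment is vacuous. Either way $(G,\{a_1,\ldots,a_m\},b_1,b_2)$ is $2$-feasible.

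For part (ii), Theorem \ref{theorem8}(ii) yields either $(2,2)$-feasibility directly or a common neighbour $v$ of just $a_1,\ldots,a_m$; in the second case the plan is to exhibit two vertex-disjoint $2$-connected subgraphs $B_1\supseteq\{a_1,\ldots,a_m\}$ and $B_2\supseteq\{b_1,b_2\}$. I would build $B_2$ by extracting, via Menger's theorem in the $(m+4)$-connected graph $G-\{v,a_1,\ldots,a_m\}$, two internally disjoint $b_1$-$b_2$ paths whose union is $2$-connected, contains $\{b_1,b_2\}$, and avoids $\{v,a_1,\ldots,a_m\}$. For $B_1$ I would use the common neighbour $v$ together with a cycle through $\{v,a_1,\ldots,a_m\}$ in $G-V(B_2)$, furnished by a Dirac-type ``cycle through $k$ specified vertices'' theorem once $G-V(B_2)$ is shown to be $(m+1)$-connected. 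The main obstacle is controlling $|V(B_2)|$ so that this residual connectivity survives: this is precisely where the one-unit jump from $2m+4$ to $2m+5$ is spent, and it forces a careful choice of the two $b_1$-$b_2$ paths defining $B_2$ (for instance, favouring length-$2$ paths through common neighbours of $b_1,b_2$ when available, and otherwise invoking a sharper Menger/fan argument) rather than arbitrary Menger paths.
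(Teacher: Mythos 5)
Your verification of the hypothesis of Theorem \ref{theorem8} and your treatment of part (i) coincide with the paper's argument: connectivity forces $\mathcal{X}=\emptyset$ or $\mathcal{X}=\{V(G)\setminus\{a_1,\ldots,a_m,b_1,b_2\}\}$, and in both cases the edge count $(m+2)v(\mathcal{G}|\mathcal{X})$ resp. $(m+2)(m+1)/2$ beats the threshold; for (i) the length-two path $b_1vb_2$ through the common neighbour leaves a $(2m+1)$-connected graph, exactly as in the paper. Up to that point your proposal is fine.

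Part (ii), however, has a genuine gap, and you have identified it yourself without closing it. Your plan builds the block $B_2\supseteq\{b_1,b_2\}$ first, as the union of two internally disjoint $b_1$-$b_2$ paths from Menger's theorem, and then needs $G-V(B_2)$ to retain enough connectivity (you ask for $(m+1)$-connectivity) to find a cycle through $A\cup\{v\}$. But Menger gives no control whatsoever on the lengths of those paths: $|V(B_2)|$ can be arbitrarily large compared with $\kappa(G)=2m+5$, so $G-V(B_2)$ may be disconnected, and no amount of ``favouring length-$2$ paths when available'' or a ``sharper Menger/fan argument'' repairs this, since short $b_1$-$b_2$ paths avoiding $A\cup\{v\}$ need not exist and no connectivity hypothesis bounds the size of a union of two paths. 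The paper spends the extra unit of connectivity in the opposite way: it makes the $A$-side small rather than the $\{b_1,b_2\}$-side. Concretely, after obtaining the common neighbour $v$ of $A$, it applies Theorem \ref{theorem8}(ii) a second time to $G'=G-v$, which is still $(2m+4)$-connected and hence satisfies the hypothesis; since $(2,2)$-feasibility of $G'$ would pass to $G$, this yields a second common neighbour $v'$ of $A$ in $G'$. Then $G[A\cup\{v,v'\}]$ contains $K_{2,m}$, so $A$ lies in a $2$-connected subgraph $B$ of size exactly $m+2$, and $G-V(B)$ is $(2m+5)-(m+2)=(m+3)$-connected, hence $2$-connected and contains $b_1,b_2$, giving $(2,2)$-feasibility directly. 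You would need to replace your construction of $B_2$ and the Dirac-type cycle step by this (or some other argument that bounds the size of one of the two blocks) for part (ii) to go through.
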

\begin{proof}
To prove this, we first show that if $(G, \{a_1,\ldots,a_m\}, b_1,b_2)$ is $(2m+4)$-connected, then $G$ satisfies the condition of Theorem \ref{theorem8}. That is, there is no $\{a_1,\ldots , a_m,b_1, b_2\}$-collection $\mathcal{X}$ in $G$ such that  $|N_G(X)| \leq m+2$ for all $X\in \mathcal{X}$, and $e(\mathcal{G}| \mathcal{X})\leq (m+2)v(\mathcal{G}|\mathcal{X})-m^2/2-5m/2-4$.
Suppose not, then  $\mathcal{X}=\emptyset$ or $\mathcal{X}=V(G)\setminus \{a_1,\ldots , a_m, b_1, b_2\}$ by the connectivity of $G$. This yields
\begin{equation*}
e(\mathcal{G}| \mathcal{X})\geq\left\{
\begin{aligned}
&(m+2) v(\mathcal{G}| \mathcal{X}),\ \text{if}\ \mathcal{X}=\emptyset, \\
&(m+2)(m+1)/2,\ \text{if}\ \mathcal{X}=V(G)\setminus \{a_1,\ldots , a_m, b_1, b_2\}.
\end{aligned}
\right.
\end{equation*}
In each case, $e(\mathcal{G}| \mathcal{X})> (m+2)v(\mathcal{G}| \mathcal{X})-m^2/2-5m/2-4$, a contradiction. So if $G$ is $(2m+4)$-connected, then $G$ satisfies the condition of Theorem \ref{theorem8}.

 (i) If not, then $a_1,\ldots , a_m, b_1,b_2$ have a common neighbour in $G-\{a_1,\ldots , a_m, b_1,b_2\}$ by Theorem \ref{theorem8} (i). So $G-\{a_1,\ldots , a_m\}$ has a $b_1$-$b_2$ path $P$ of order three, and then $G-V(P)$ is $(2m+1)$-connected. This implies $(G, \{a_1,\ldots , a_m\}, b_1,b_2)$ is 2-feasible, a contradiction. Thus (i) holds.

(ii) If not, then $a_1,\ldots , a_m$ have a common neighbour $v$ in $G-\{a_1,\ldots , a_m, b_1, b_2\}$ by Theorem \ref{theorem8} (ii). Let $G^\prime=G-\{v\}$. Then $G^\prime$ is $(2m+4)$-connected. By Theorem \ref{theorem8} (ii) again, $a_1,\ldots , a_m$ still have a common neighbour in $G^\prime-\{a_1,\ldots , a_m, b_1, b_2\}$. This implies that $G$ has a block $B$ of size $m+2$ containing  $\{a_1,\ldots , a_m\}$. Since $G$ is $(2m+5)$-connected, $G-V(B)$ is $(m+3)$-connected, and then $(G, \{a_1,\ldots , a_m\}, b_1,b_2)$ is (2,2)-feasible, which contradicts our assumption. Thus (ii) holds.
\end{proof}

\begin{corollary} \label{corollary2}
 Let $m$ be a positive integer and let $G$ be a $(2m+8)$-connected graph. For two distinct edges $\{e_1,e_2\}\subseteq E(G)$, and any distinct vertices $a_1,\ldots , a_m$ of $G-V(\{e_1,e_2\})$, there exists a cycle $C$ in $G$ such that $\{e_1,e_2\}\subseteq E(C)$ and $G-V(C)$ has a component containing  $\{a_1,\ldots , a_m\}$.
\end{corollary}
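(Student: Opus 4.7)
The plan is to contract $e_1$ and $e_2$ so as to reduce the problem to finding two internally disjoint paths between the two contracted vertices, and then to lift the resulting cycle back to $G$. Write $e_1 = u_1 v_1$ and $e_2 = u_2 v_2$. First, I would form $G^* := G/\{e_1, e_2\}$, the simple graph obtained by identifying $u_1$ with $v_1$ into a new vertex $s$ and $u_2$ with $v_2$ into a new vertex $t$. Since each edge contraction drops the connectivity by at most one, $\kappa(G^*) \geq \kappa(G) - 2 \geq 2m+6$, and $a_1, \ldots, a_m, s, t$ remain pairwise distinct vertices of $G^*$.

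Next I would apply Theorem~\ref{theorem7} with $k = 2$ to $(G^*, \{a_1, \ldots, a_m\}, s, t)$; this is legitimate since $f(m,2,2) \leq 2m + 6 \leq \kappa(G^*)$. This yields two internally disjoint $s$-$t$ paths $P_1, P_2$ in $G^*$ with $\{a_1, \ldots, a_m\} \cap V(P_1 \cup P_2) = \emptyset$ and $H := G^* - V(P_1 \cup P_2)$ $2$-connected. The union $C^* := P_1 \cup P_2$ is a cycle in $G^*$ through $s$ and $t$.

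The main step is to lift $C^*$ to a cycle $C$ in $G$ containing both $e_1$ and $e_2$. Each of the two edges of $C^*$ incident to $s$, say $sx$ and $sy$, corresponds under the uncontraction of $e_1$ to an edge of $G$ with one endpoint in $\{u_1, v_1\}$; I will call it of type $u_1$ or type $v_1$ accordingly (when $x$ is a common $G$-neighbour of $u_1$ and $v_1$ either type may be assigned), and similarly at $t$. In the good case the two types at $s$ can be set to $\{u_1, v_1\}$ and the two types at $t$ to $\{u_2, v_2\}$, and then replacing $s$ in $C^*$ by the subpath $u_1 v_1$ along $e_1$, and $t$ by $u_2 v_2$ along $e_2$, produces a cycle $C$ of $G$ with $\{e_1, e_2\} \subseteq E(C)$, $V(C) \cap \{a_1, \ldots, a_m\} = \emptyset$, and $G - V(C) = H$ which is $2$-connected, hence connected and contains every $a_i$ in a single component.

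The hard part will be the degenerate case where, by symmetry, both edges of $C^*$ at $s$ are forced to have the same type $u_1$ (that is, $x$ and $y$ are neighbours of $u_1$ but not of $v_1$), so the naive uncontraction skips $v_1$ and misses $e_1$. To handle this I would exploit the high connectivity of $G$: since $v_1$ has at least $2m + 7$ neighbours in $G$ other than $u_1$, the vertex $s$ has many ``pure $v_1$'' neighbours in $G^*$ (vertices $z$ with $v_1 z \in E(G)$ but $u_1 z \notin E(G)$). Via a Menger-type rerouting, one of $P_1, P_2$ can be replaced with a path that enters $s$ along an edge $sz$ of type $v_1$: if such a $z$ already lies on $C^*$ away from $s$, I would splice the relevant path to $z$ and replace its prefix by the single edge $sz$; otherwise, I would use the $2$-connectivity of $H$ to attach such a $z$ to $C^*$ via a short detour, shrinking $H$ only marginally so that the resulting complement remains connected and still contains every $a_i$. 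The symmetric rerouting applied at $t$ then yields the desired cycle $C$ in $G$, completing the proof.
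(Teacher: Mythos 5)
Your reduction up to the lifting step is sound: contracting $e_1$ and $e_2$ costs at most $2$ in connectivity, Theorem \ref{theorem7} with $k=2$ applies to $G^*$, and in the ``good case'' the uncontraction indeed yields a cycle $C$ through $e_1,e_2$ with $G-V(C)=H$ connected. The genuine gap is the degenerate case, which is precisely the difficulty the paper's proof is built around. If both cycle edges at $s$ go to vertices adjacent in $G$ to $u_1$ but not to $v_1$, you propose to reroute onto a $G^*$-edge $sz$ with $v_1z\in E(G)$. When such a $z$ lies on $C^*$, the shortcut is fine for the weaker ``component'' conclusion. But nothing prevents every such $z$ from lying in $V(H)\cup\{a_1,\ldots,a_m\}$ (your count of ``pure $v_1$'' neighbours is also unjustified, since all neighbours of $v_1$ could be common neighbours of $u_1$ and $v_1$ --- though those would in fact serve). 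In that case your ``short detour through $H$ using its $2$-connectivity'' is not a proof: the detour is a path inside $H$ from $z$ back to the cycle, it must avoid $a_1,\ldots,a_m$ (which $2$-connectivity of $H$ does not guarantee once $m\geq 2$), and after deleting its vertices the complement must still have a single component containing all of $a_1,\ldots,a_m$, which deleting a path from a $2$-connected graph can easily destroy. So the degenerate case is another instance of a removable-path-avoiding-vertices problem inside $H$, not a routine patch; moreover the reroutings at $s$ and at $t$ can interfere (the $v_2$-neighbour needed at $t$ may lie on the portion of a path you just discarded at $s$).

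For comparison, the paper makes the lift automatic rather than repairing it afterwards: before contracting $e_i=b_ib_{i+1}$ it deletes all edges from $b_i,b_{i+1}$ to their non-common neighbours, so every neighbour of the contracted vertex $b_i'$ is a common neighbour of $b_i$ and $b_{i+1}$, and any path through $b_i'$ lifts to one through $e_i$. The price is that $b_1',b_3'$ may then have small degree, so Theorem \ref{theorem7} or Corollary \ref{corollary1} can no longer be quoted; instead the paper applies Theorem \ref{theorem8} directly to $(G',A,b_1',b_3')$, using Claims \ref{claim4}--\ref{claim6} (including a separate application of Corollary \ref{corollary1}(i) to exclude common neighbours across the two edges) to rule out the exceptional collections $\mathcal{X}$. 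Your plan needs a mechanism of comparable strength for the degenerate case before it can count as a proof.
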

\begin{proof}
On the contrary, suppose this lemma fails. Write $e_1=b_1b_2$, $e_2=b_3b_4$, $A= \{a_1,\ldots , a_m\}$ and $B=\{b_1,b_2,b_3,b_4\}$. Since $G$ is $(2m+8)$-connected, there exist four distinct vertices, say $u_1,u_2,u_3,u_4$, in  $G- (A\cup B)$ such that $u_j\in N_{G} (b_j)$ for each $j\in [4]$.  For any $i\in \{1,3\}$, let $G^{\prime}$ be the graph obtained from $G$ by deleting all edges between $b_i,b_{i+1}$ and $N_{G}(b_i)\cup N_{G}(b_{i+1})  \cup \{u_i,u_{i+1}\} \setminus (N_{G}(b_{i})\cap N_{G}(b_{i+1}))$, and contracting $b_i,b_{i+1}$ to a vertex $b_i^\prime$. Then the following results hold.

\begin{claim}\label{claim4}
  $G^{\prime}-\{b_1^\prime,b_3^\prime\}$ is $(2m+4)$-connected, and for any $i\in \{1,3\}$, we have $d_{G^\prime-A}(b_i^\prime)\geq 2$.
 \end{claim}

 \begin{claim}\label{claim5}
 For any $x\in \{b_1,b_2 \}$ and $y\in \{b_3,b_4\}$, $x,y$  have no  common neighbour in  $G- (A\cup B)$.
 \end{claim}
\begin{proof} Suppose for  a contradiction that there exists $v\in G- (A\cup B)$ such that $xv,yv\in E(G)$. One may assume $x=b_1$ and $y=b_3$. Let $G^{\prime}=G-\{v,b_1,b_3\}$. Then  $(G^{\prime}, A,b_2,b_4)$ is $(2m+5)$-connected. So, this lemma follows from Corollary \ref{corollary1} (i) and our assumption.
\end{proof}

 \begin{claim}\label{claim6}
If $G^{\prime}$ (resp. $G$) has a component $H$  containing $A$, then $G^{\prime}-V(H)$ (resp. $G-V(H)$) has no block containing $\{b_1^\prime,b_3^\prime\}$ (resp. $B$). That is, $(G^{\prime}, A, b_1^\prime,b_3^\prime)$ is not 2-feasible.
 \end{claim}
\begin{proof}
Suppose not, then by Menger's Theorem and our construction, $G-V(H)$ has a cycle $C$ such that $\{e_1,e_2\}\subseteq E(C)$, which contradicts our assumption. Hence this claim holds.
\end{proof}

Let $\mathcal{G^{\prime}}=(G^{\prime}, A, b_1^\prime,b_3^\prime)$. Combining Claim \ref{claim6}  with  Theorem \ref{theorem8} (i) and the connectivity of $G$ implies that there is a $A\cup \{b_1^\prime,b_3^\prime\}$-collection $\mathcal{X}$ in $G^{\prime}$ such that $|N_{G^{\prime}}(X)| \leq m+2$ for all $X\in \mathcal{X}$, and $e(\mathcal{G^{\prime}}| \mathcal{X})\leq (m+2)v(\mathcal{G^{\prime}}|\mathcal{X})-m^2/2-5m/2-4$. By Claim \ref{claim4} ($G^{\prime}-\{b_1^\prime,b_3^\prime\}$ is $(2m+4)$-connected), $\mathcal{X}=\emptyset$, or $V(G^{\prime})\setminus \{A, b_1^\prime, b_3^\prime,u,v\}\subseteq \mathcal{X}$  with $\{u,v\} \subseteq V(G^{\prime})\setminus \{A, b_1^\prime, b_3^\prime\}$. Further, Claims \ref{claim4} and \ref{claim5} state that $d_{G^{\prime}-\{A, b_1^\prime, b_3^\prime,u,v\}}(b_1^\prime \cup b_3^\prime)\geq 2$. So $\mathcal{X}=\emptyset$ or $\mathcal{X}=V(G^{\prime})\setminus \{A, b_1^\prime, b_3^\prime\}$.  This yields
\begin{equation*}
e(\mathcal{G^{\prime}}| \mathcal{X})\geq\left\{
\begin{aligned}
&(m+2) (v(\mathcal{G^{\prime}}| \mathcal{X})-2)+4,\ \text{if}\ \mathcal{X}=\emptyset, \\
&(m+2)(m+1)/2,\ \text{if}\ \mathcal{X}=V(G^{\prime})\setminus \{A, b_1^\prime, b_3^\prime\},
\end{aligned}
\right.
\end{equation*}
a contradiction. So this corollary holds.
\end{proof}

\section{Proof of Theorem \ref{theorem8}}\label{section3}
We will prove Theorem \ref{theorem8} by induction on $m$. The main induction step will be given in  this section.

 \begin{lemma}\label{lemma2}
Let $(G, \{a_1,a_2\},b_1,b_2)$ be a 2-rooted graph with $v(G)\geq 5$. If $a_1$ and $a_2$ have no common neighbour in $G-\{b_1,b_2\}$, then $G$ has a $\{a_1, a_2,b_1,b_2\}$-collection $\mathcal{X}$ such that  $|N_G(X)| \leq 4$ for all $X\in \mathcal{X}$, and $e(\mathcal{G}| \mathcal{X})\leq 4v(\mathcal{G}| \mathcal{X})-11$.
\end{lemma}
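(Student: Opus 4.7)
The plan is to apply Seymour's theorem (Theorem~\ref{theorem1}) as the central tool and to construct the required collection $\mathcal{X}$ according to its two outcomes, using the no-common-neighbour hypothesis sharply to control edges. Throughout, write $T:=\{a_1,a_2,b_1,b_2\}$ and $v:=v(\mathcal{G}|\mathcal{X})$.

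If $(G,\{a_1,a_2\},b_1,b_2)$ is not feasible, Theorem~\ref{theorem1} supplies a $T$-collection $\mathcal{Y}$ with $|N_G(Y)|\leq 3$ for every $Y\in\mathcal{Y}$ and $(G/\mathcal{Y},a_1,b_1,a_2,b_2)$ planar. I would set $\mathcal{X}:=\mathcal{Y}$, so the neighbourhood bound $|N_G(X)|\leq 3\leq 4$ is immediate. For the edge bound, a planar drawing of $G/\mathcal{Y}$ with the four terminals in the cyclic order $a_1,b_1,a_2,b_2$ on the outer face extends to a planar drawing of $\mathcal{G}/\mathcal{Y}$: the four boundary edges $a_1b_1,b_1a_2,a_2b_2,b_2a_1$ lie on the outer cycle, and the single added chord $a_1a_2$ can be routed through the outer face without crossings (the chord $b_1b_2$ is deliberately omitted, which is crucial). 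Hence $e(\mathcal{G}|\mathcal{X})\leq e(\mathcal{G}/\mathcal{X})\leq 3v-6\leq 4v-11$ whenever $v\geq 5$; the degenerate case $v=4$ (meaning $\mathcal{Y}=\{V(G)\setminus T\}$) is handled by a direct count using the no-common-neighbour condition, which forbids one of the six possible edges among $T$ from being realised.

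If $(G,\{a_1,a_2\},b_1,b_2)$ is feasible, the natural starting point is $\mathcal{X}_0:=\{\text{vertex sets of components of }G-T\}$, which is automatically a valid $T$-collection (distinct components of $G-T$ share no edges in $G$) and satisfies $N_G(X)\subseteq T$, hence $|N_G(X)|\leq 4$, for every $X\in\mathcal{X}_0$. Here $v(\mathcal{G}|\mathcal{X}_0)=4$, so the edge bound $5=4\cdot 4-11$ is met when $b_1b_2\notin E(G)$ (the five added terminal edges exhaust the possibilities). When $b_1b_2\in E(G)$, I would refine $\mathcal{X}_0$ by promoting a small component $C$ of $G-T$ back into the core graph; the no-common-neighbour hypothesis forces each vertex of $C$ to have at most three neighbours in $T$, so $e(C,T)\leq 3|C|$ and $e(G[C])\leq\binom{|C|}{2}$, and a direct computation confirms the bound $4(4+|C|)-11$ for $|C|\leq 2$. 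The residual subcase---$b_1b_2\in E(G)$ with every component of $G-T$ of size at least three---is addressed by a secondary application of Seymour's theorem to the modified rooted graph $(G-b_1b_2,\{a_1,a_2\},b_1,b_2)$, or by extracting a $3$-separator inside a thick component, to supply a sufficiently refined collection.

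The principal obstacle is precisely this last subcase, where the bound $4v-11$ is tight and the no-common-neighbour condition must eliminate the single excess edge that would otherwise violate it. I expect the argument there to hinge on showing that a "thick" $G-T$ together with $b_1b_2\in E(G)$ forces either a small separator or a planar refinement that converts into a valid $\mathcal{X}$; verifying this will likely be the most intricate part of the proof.
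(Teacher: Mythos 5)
Your non-feasible branch (for $v:=v(\mathcal{G}|\mathcal{X})\geq 5$) is fine: adding the four boundary edges and the single diagonal $a_1a_2$ in the outer face does keep $\mathcal{G}/\mathcal{X}$ planar, so $e(\mathcal{G}|\mathcal{X})\leq 3v-6\leq 4v-11$. But the rest of the plan has genuine gaps exactly where the lemma is hard. In the feasible branch your base collection $\mathcal{X}_0$ (the components of $G-T$) never satisfies the bound: feasibility means there is a $b_1$-$b_2$ path avoiding $a_1,a_2$, so either $b_1b_2\in E(G)$ or the interior of that path lies inside a single component $X$ of $G-T$ with $\{b_1,b_2\}\subseteq N_G(X)$; in the latter case forming the clique on $N_G(X)$ creates the edge $b_1b_2$ in $G/\mathcal{X}_0$, and this edge is \emph{not} deleted in $\mathcal{G}|\mathcal{X}_0$ (only new terminal--non-terminal edges are removed). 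Hence $e(\mathcal{G}|\mathcal{X}_0)=6>5=4\cdot 4-11$ in every feasible instance, so your claim that the bound is met when $b_1b_2\notin E(G)$ is false, and the whole feasible case collapses onto the ``refinement''/``residual subcase'' that you explicitly leave unproved. The same oversight invalidates your fix for the degenerate case $v=4$ of the non-feasible branch: the hypothesis that $a_1,a_2$ have no common neighbour in $G-\{b_1,b_2\}$ says nothing about the sixth terminal edge $b_1b_2$, which can lie in $E(G)$ or arise from an $X$ with $\{b_1,b_2\}\subseteq N_G(X)$. For instance, take $b_1b_2\in E(G)$ and two large cliques $L_1,L_2$, with $a_i$ joined into $L_i$ and $b_1,b_2$ joined into both lobes: the rooted graph is infeasible, the no-common-neighbour hypothesis holds, Seymour's outcome can be $\mathcal{Y}=\{L_1,L_2\}$ with $v=4$, and then $e(\mathcal{G}|\mathcal{Y})=6$. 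The lemma still holds there, but only via a finer collection (e.g.\ pulling one vertex out of a lobe to reach $v=5$ and exactly $9=4\cdot 5-11$ edges), which your argument does not produce.

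The underlying problem is that in your plan the no-common-neighbour hypothesis never does quantitative work, whereas the bound $4v-11$ is unreachable without it in the tight cases. The paper's proof is organized differently precisely for this reason: it does not split on feasibility of $(G,\{a_1,a_2\},b_1,b_2)$ at all, but contracts $a_1,a_2$ to a single vertex $a^*$, takes an arbitrary fifth vertex $u$ as the second root, applies Theorem \ref{theorem1} to $(G',\{a^*,u\},b_1,b_2)$, and then splits $a^*$ back into $a_1,a_2$. The hypothesis is used to bound the number of edges reinstated at $a_1,a_2$ towards non-terminals by $v(\mathcal{G}|\mathcal{X})-5$, and this combines with Euler's bound $3v-6$ on the planar part to give exactly $4v-11$. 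To salvage your route you would need an argument of comparable strength for the feasible case and for the $v=4$ case --- i.e.\ a mechanism that either eliminates the edge $b_1b_2$ from $\mathcal{G}|\mathcal{X}$ or enlarges $v$ while keeping every $|N_G(X)|\leq 4$ and controlling the surviving edges at $a_1,a_2$; as it stands, that mechanism is missing.
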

\begin{proof}
On the contrary, suppose this lemma fails. Let $G^\prime$ be a graph obtained from $G$ by deleting all edges connecting $a_1,a_2$, and contracting $a_1,a_2$ to a vertex $a^*$. Write  $\mathcal{G^\prime}=(G^\prime, \{a^*,u\},b_1,b_2)$, where $u\in  V(G)\setminus \{a_1,a_2,b_1,b_2\}$ ($u$ exists since  $v(G)\geq 5$). According to Theorem  \ref{theorem1}, either $\mathcal{G^{\prime}}$ is feasible, or there is some $\{a^*, u, b_1, b_2\}$-collection $\mathcal{X}$ in $G^{\prime}$ such that $|N_{G^{\prime}}(X)| \leq 3$ for all $X\in \mathcal{X}$ and $(G^{\prime}/\mathcal{X} , a^*, b_1, u, b_2)$ is planar. By the definition and construction, $\mathcal{G^{\prime}}$ is not feasible. So $\mathcal{X}$  is a $\{a_1, a_2, b_1, b_2\}$-collection in $G$ such that $|N_{G}(X)|\leq |N_{G^{\prime}}(X)|+1 \leq 4$ for all $X\in \mathcal{X}$, and
\begin{align*}
E(\mathcal{G}| \mathcal{X})=&
\{a_1w: w\in N_{\mathcal{G}|\mathcal{X}}(a_1)\setminus \{a_2,b_1,b_2\}\} \cup \{a_2w: w\in N_{\mathcal{G}| \mathcal{X}}(a_2)\setminus \{a_1,b_1,b_2\}\}\\
&\cup E(G^\prime/\mathcal{X})\cup \{a_1a_2,a_1b_1,a_1b_2,a_2b_1,a_2b_2\}.
\end{align*}

Because $a_1$ and $a_2$ have no common neighbour in $G-\{b_1,b_2\}$, it follows that
$$|\{a_1w: w\in N_{\mathcal{G}\mid \mathcal{X}}(a_1)\setminus \{a_2,b_1,b_2,u\}\} \cup \{a_2w: w\in N_{\mathcal{G}\mid \mathcal{X}}(a_2)\setminus \{a_1,b_1,b_2,u\}\}|\leq v(\mathcal{G}| \mathcal{X})-5,$$
and one may assume $a_1u\notin E(G)$. Let $G^{\prime\prime}$ be a graph obtained from $G^{\prime}$ by  replacing $a^*$ by $a_1$, and deleting edges $a^*u$, $a_1w$ with $w\in N_{\mathcal{G}\mid \mathcal{X}}(a_1)\setminus \{a_2,b_1,b_2\}$. Further, we can draw edges $a_1b_1$, $a_1b_2$, $a_1a_2$, $a_2b_1$, $a_2b_2$ and $a_2u$ outside the disk, without introducing edge crossings. So $(G^{\prime\prime}/\mathcal{X}) \cup \{a_1a_2,a_1b_1,a_1b_2,a_2b_1,a_2b_2\}$ is planar. By Euler formula,
$$ e((G^{\prime\prime}/\mathcal{X}) \cup \{a_1a_2,a_1b_1,a_1b_2,a_2b_1,a_2b_2,a_2u\}) \leq 3 v(G/ \mathcal{X})-6.$$
This implies $e(\mathcal{G}|\mathcal{X}) \leq 3 v(G/ \mathcal{X})-6+ v(G/ \mathcal{X})-5= 4v(\mathcal{G}| \mathcal{X})-11$, as required.
\end{proof}

\begin{lemma}\label{lemma4}
Theorem \ref{theorem8} holds for $m = 0, 1, 2$.
\end{lemma}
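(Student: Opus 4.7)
The plan is to argue the contrapositive for each $m \in \{0, 1, 2\}$: assuming both alternatives in Theorem~\ref{theorem8}(i) (resp.\ (ii)) fail, I exhibit an $\{a_1, \ldots, a_m, b_1, b_2\}$-collection $\mathcal{X}$ with $|N_G(X)| \leq m + 2$ for all $X \in \mathcal{X}$ and $e(\mathcal{G}|\mathcal{X}) \leq (m+2)v(\mathcal{G}|\mathcal{X}) - m^2/2 - 5m/2 - 4$. The workhorse for the top case $m = 2$ is Lemma~\ref{lemma2}, whose $4v - 11$ coincides with $(m+2)v - m^2/2 - 5m/2 - 4$ at $m = 2$; the lower cases are then handled by reducing to the $m = 2$ picture (or directly to Theorem~\ref{theorem1}) via a dummy-vertex augmentation in the spirit of the $a_1, a_2 \mapsto a^*$ contraction from Lemma~\ref{lemma2}.

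For $m = 2$, part~(ii) follows almost verbatim from Lemma~\ref{lemma2}: the no-common-neighbour hypothesis in Theorem~\ref{theorem8}(ii) coincides with that of Lemma~\ref{lemma2} (a common neighbour of $a_1, a_2$ in $G \setminus \{b_1, b_2\}$ is the same as one in $G \setminus \{a_1, a_2, b_1, b_2\}$, since simple graphs have no loops), so the output collection is exactly what Theorem~\ref{theorem8} forbids. For part~(i) the extra twist is that we are only told there is no common neighbour of all four roots. I would first apply Lemma~\ref{lemma2} to reduce to the case that $a_1, a_2$ do share a common neighbour $v$; the failure of (i) then forces $v$ to miss at least one $b_i$. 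Deleting $v$ (or, more robustly, the entire set $U = N(a_1) \cap N(a_2) \setminus \{b_1, b_2\}$) and reapplying Lemma~\ref{lemma2} yields a collection in the smaller graph, which I then lift back through $v$ (resp.\ $U$), absorbing its at-most-$(m+2)$ incidences into the budget.

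For $m \in \{0, 1\}$ I reduce to the 2-rooted setting by an identification/duplication trick. For $m = 1$ I would introduce a second root $a_2^*$ as a twin of $a_1$ with identical neighbourhood, form $(G^*, \{a_1, a_2^*\}, b_1, b_2)$, and invoke either the just-verified $m = 2$ case or Theorem~\ref{theorem1}: a feasible structure in $G^*$ descends to a 2-feasible decomposition of $G$ after collapsing $a_2^*$ onto $a_1$, and any collection in $G^*$ pulls back with $|N_G(X)| \leq 3$ and edge count within $3v - 7$. For $m = 0$ part~(ii) is vacuous, and part~(i) reduces to Seymour's Theorem~\ref{theorem1} applied to $(G, \{u, w\}, b_1, b_2)$ for two auxiliary vertices $u, w \in V(G) \setminus \{b_1, b_2\}$ (available by $v(G) \geq 3$, after a trivial hand-check at $v(G) = 3$), from which the planarity output translates directly into a collection of the required form.

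The main obstacle I anticipate is the lift step in $m = 2$ part~(i) and in the $m = 1$ reduction: I need (a) the augmented or deleted vertex not to reappear as a bag of the lifted collection in a way that pushes $|N_G(X)|$ past $m + 2$, and (b) the count $e(\mathcal{G}|\mathcal{X})$ to stay within the slack $m^2/2 + 5m/2 + 4$. Handling (a) will require casework on the adjacency pattern of the common neighbour $v$ to $\{b_1, b_2\}$; handling (b) leans on the asymmetric treatment of root-to-non-root versus non-root-to-non-root edges in $\mathcal{G}|\mathcal{X}$ — precisely the asymmetry that lets the $4v - 11$ bound of Lemma~\ref{lemma2} fall out of Euler's formula, and which must be preserved through the reduction.
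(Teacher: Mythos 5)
There is a genuine gap: apart from the case $m=2$, part (ii) (which you, like the paper, get directly from Lemma~\ref{lemma2}), the steps you flag as ``anticipated obstacles'' are exactly the points where your reductions break, and they are not repairable as sketched. For $m=2$, part (i): you cannot ``reapply Lemma~\ref{lemma2}'' after deleting a single common neighbour $v$, since its no-common-neighbour hypothesis need not hold in $G-v$; and deleting the whole set $U=N(a_1)\cap N(a_2)\setminus\{b_1,b_2\}$ cannot be lifted back, because the bags of a collection in $G-U$ may acquire up to $|U|$ extra neighbours in $G$ (so $|N_G(X)|\le 4$ is lost) and the edges incident with $U$ are unbounded, so they cannot be ``absorbed'' into the constant slack in $4v-11$. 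The paper's route is different in substance: since (i) fails, some common neighbour $v$ of $a_1,a_2$ satisfies $vb_1\notin E(G)$; one deletes only this $v$ and applies Seymour's Theorem~\ref{theorem1} (not Lemma~\ref{lemma2}) to $(G-v,\{a_1,a_2\},b_1,b_2)$. In the feasible outcome, $v$ being adjacent to both $a_1$ and $a_2$ upgrades ``same component'' to ``same block'', giving 2-feasibility; in the other outcome the planarity of $(G'/\mathcal{X},a_1,b_1,a_2,b_2)$ is used to re-insert $v$ (its edges to $a_1,a_2,b_2$ can be drawn without crossings precisely because $vb_1\notin E(G)$), and Euler's formula yields $4v-11$. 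Your sketch uses neither the planarity alternative nor the missing edge $vb_1$, and without them the count does not close.

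For $m=0,1$ the twin/auxiliary-vertex reduction also fails quantitatively. If $a_2^*$ is a twin of $a_1$, a collection $\mathcal{X}$ in $G^*$ with $|N_{G^*}(X)|\le 4$ pulls back with $|N_G(X)|\le 4$, not $\le 3$, whenever neither $a_1$ nor $a_2^*$ lies in $N_{G^*}(X)$; and the bound $4v(\mathcal{G}^*|\mathcal{X})-11=4v(\mathcal{G}|\mathcal{X})-7$ is weaker than the required $3v(\mathcal{G}|\mathcal{X})-7$ unless you can discount roughly $v(\mathcal{G}|\mathcal{X})$ twin-incident edges, which you cannot (the twin may have as few as three incident edges in the auxiliary rooted graph). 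Moreover, $(2,2)$-feasibility of $G^*$ does not obviously descend to $(2,2)$-feasibility of $G$: deleting $a_2^*$ can destroy the disjointness of the two blocks. The paper handles $m=0,1$ by a short direct argument you should adopt instead: if (i) (resp.\ (ii)) fails, then $\{a_m\}$ (resp.\ some pair $\{u,a_m\}$) separates $b_1$ from $b_2$, and taking the two sides of this separator minus the roots as the two bags gives a collection with $|N_G(X)|\le m+2$ and $e(\mathcal{G}|\mathcal{X})\le 2$ or $5$, contradicting the hypothesis; in particular the $m=0$ case of (ii) is not treated as vacuous but by this same cut argument.
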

\begin{proof}
 Let $\mathcal{G} = (G, \{a_1, \ldots , a_m\}, b_1, b_2)$ be an $m$-rooted graph. We first prove that Theorem \ref{theorem8} (ii) holds for $m = 0, 1, 2$. If not, then $m=0,1$ by Lemma \ref{lemma2} and the condition of Theorem \ref{theorem8}, and $G$ has no two internally $b_1$-$b_2$ paths in $G-\{a_m\}$. So, there exists a vertex $u$ in $G-\{b_1,b_2,a_m\}$ such that $G-\{u,a_m\}$ has no $b_1$-$b_2$ path ($u$ exists since $v(G)\geq m+3$). Let $D$ be the component of $G-\{u,a_m\}$ containing $b_1$; so $b_2 \notin V(D)$. Write $\mathcal{X} = \{V(D)-\{b_1\}, V (G-D)-\{u,b_2,a_m\}\}$. Then $|N_G(X)| \leq m+2$ for all $X\in \mathcal{X}$ and
\begin{equation*}
e(\mathcal{G}|\mathcal{X})\leq\left\{
\begin{aligned}
&2=2\cdot v(G|\mathcal{X})-4,\ \text{if}\ m=0, \\
&5=3\cdot v(G|\mathcal{X})-7,\ \text{if}\ m=1.
\end{aligned}
\right.
\end{equation*}
This contradicts the condition of Theorem \ref{theorem8}. So Theorem \ref{theorem8} (ii) holds for $m = 0, 1, 2$. It remains to  show  that Theorem \ref{theorem8} (i) also holds for $m = 0, 1, 2$.

On the contrary, suppose that $\mathcal{G}$ does not satisfy Theorem \ref{theorem8} (i). For $m=0,1$, $G$ has no $b_1$-$b_2$ path in $G-\{a_m\}$.  Let $D$ be the component of $G-\{a_m\}$ containing $b_1$; so $b_2 \notin V(D)$. Let $\mathcal{X} = \{V(D)-\{b_1\}, V (G-D)-\{a_m,b_2\}\}$. Then $|N_G(X)| \leq  m+2$ for all $X\in \mathcal{X}$ and
\begin{equation*}
e(\mathcal{G}| \mathcal{X})\leq\left\{
\begin{aligned}
&0=2\cdot v(G\mid \mathcal{X})-4,\ \text{if}\ m=0, \\
&2=3\cdot v(G\mid \mathcal{X})-7,\ \text{if}\ m=1.
\end{aligned}
\right.
\end{equation*}
This contradicts the condition of Theorem \ref{theorem8}. So Theorem \ref{theorem8} (i) holds for $m = 0, 1$.

For $m=2$, Lemma \ref{lemma2} and the condition of Theorem \ref{theorem8} state that $a_1$ and $a_2$ have a common neighbour $v$ in $G-\{b_1,b_2\}$. By our assumption, one may assume $vb_1\notin E(G)$. Let $G^{\prime}=G-\{v\}$ and $\mathcal{G^{\prime}}=(G^{\prime}, \{a_1,a_2\},b_1,b_2)$.  According to Theorem  \ref{theorem1}, either $\mathcal{G^{\prime}}$ is feasible, or there is some $\{a_1, a_2, b_1, b_2\}$-collection $\mathcal{X}$ in $G^{\prime}$ such that $|N_{G^{\prime}}(X)| \leq 3$ for all $X\in \mathcal{X}$ and $(G^{\prime}/\mathcal{X} , a_1, b_1, a_2, b_2)$ is planar. If the former holds, then $G$ is 2-feasible. If the latter holds, then $|N_{G}(X)|\leq |N_{G^{\prime}}(X)|+1 \leq 4$ for all $X\in \mathcal{X}$, and we can draw edges  $a_1b_1$, $a_1b_2$, $a_1a_2$, $a_2b_1$, $a_2b_2$, $a_2v$, $a_1v$ and $b_2v$  such that  $(\mathcal{G}^{\prime}/\mathcal{X}) \cup \{a_2v,a_1v, b_2v\}$ is planar. It follows from Euler formula that $e((\mathcal{G}^{\prime}/\mathcal{X})\cup \{a_2v,a_1v, b_2v\})\leq 3v(G/\mathcal{X})-6$.
Then
\begin{align*}
e(\mathcal{G}| \mathcal{X}) \leq e(\mathcal{G}/\mathcal{X})\leq e(\mathcal{G^{\prime}/X})+d_{\mathcal{G}/\mathcal{X}}(v)\leq 3v(G/\mathcal{X})-6+v(G/\mathcal{X})-5=4v(G|\mathcal{X})-11.
   \end{align*}
This contradicts the condition of Theorem \ref{theorem8} again.  This completes the proof of this lemma.
\end{proof}

 We now proceed by induction on $m$. Owing to Lemma \ref{lemma4}, Theorem \ref{theorem8} holds for $m = 0, 1, 2$. Suppose $m \geq 3$, Theorem \ref{theorem8} hold for all $(m-1)$-rooted graphs. Let $G^{\prime}$ be the graph obtained from $G$ by deleting all edges between $a_1,a_2$ and $N_{G}(a_1)\cup N_{G}(a_{2}) \setminus (N_{G}(a_{1})\cap N_{G}(a_{2}))$, and contracting $a_1,a_2$ to a vertex $a^*$. Let $\mathcal{G^{\prime}}=(G^{\prime},\{a^*,a_3,\ldots,a_m\},b_1,b_2)$. We see that if $\mathcal{G^{\prime}}$ satisfies the condition of Theorem \ref{theorem8}, then by induction, the following two statements hold.

 (i) Either $a^*,a_3,\ldots,a_m, b_1,b_2$ have a common neighbour in $G- \{a^*,a_3,\ldots,a_m, b_1, b_2\}$, or $\mathcal{G^\prime}$ is $2$-feasible.

(ii)  Either $a^*,a_3,\ldots,a_m$ have a common neighbour in $G- \{a^*,a_3,\ldots,a_m, b_1, b_2\}$, or $\mathcal{G^\prime}$ is $(2,2)$-feasible.

\noindent By our construction, Theorem \ref{theorem8} holds. Note that $v(G^\prime)\geq m+2$, so $G^\prime$  has a $\{a^*,a_3,\ldots,a_m, b_1, b_2\}$-collection $\mathcal{X}$ such that $|N_{G^\prime}(X)| \leq m+1$ for all $X\in \mathcal{X}$, and
\begin{align*}
e(\mathcal{G}^\prime| \mathcal{X})&\leq (m+1)v(\mathcal{G}^\prime| \mathcal{X})-(m-1)^2/2-5(m-1)/2-4\\
&=(m+1)v(\mathcal{G} | \mathcal{X})-m^2/2-5m/2-3.
\end{align*}
%
It is easy to see that $\mathcal{X}$  is a $\{a_1,a_2,a_3,\ldots,a_m, b_1, b_2\}$-collection in $G$ such that $|N_{G}(X)|\leq |N_{G^\prime}(X)|+1\leq m+2$ for all $X\in \mathcal{X}$.  By the definition of $\mathcal{G}| \mathcal{X}$, we obtain
$$E(\mathcal{G}| \mathcal{X})=E(\mathcal{G^\prime}| \mathcal{X})\cup \{a_iw: w\in N_{\mathcal{G}| \mathcal{X}}(a_i)\cup \{a_3,\ldots,a_m,b_1,b_2\} \text{ with } i\in [2]\}.$$
Note that
\begin{align*}
&|\{a_iw: w\in N_{\mathcal{G}| \mathcal{X}}(a_i)\cup \{a_3,\ldots,a_m,b_1,b_2\} \text{ with } i\in [2]\}| \\
\leq &(v(\mathcal{G}| \mathcal{X})-1)+|\{a_jw: w\in N_{\mathcal{G}| \mathcal{X}}(a_1)\cap N_{\mathcal{G}| \mathcal{X}}(a_2) \text{ for some } j\in [2]\}|
\end{align*}
and
$$\{a_jw: w\in N_{\mathcal{G}| \mathcal{X}}(a_1)\cap N_{\mathcal{G}| \mathcal{X}}(a_2)\text{ for some } j\in [2]\} \subseteq E(\mathcal{G^\prime| X}).$$
This implies $e(\mathcal{G}| \mathcal{X})\leq e(\mathcal{G^\prime}| \mathcal{X})+v(\mathcal{G}| \mathcal{X})-1 \leq (m+2)v(\mathcal{G} | \mathcal{X})-m^2/2-5m/2-4$, a contradiction. So Theorem \ref{theorem8} holds.

\section{Proof of Theorems \ref{theorem7} and \ref{theorem3}}\label{section4}
In this section, we will show Theorems \ref{theorem7} and \ref{theorem3}. We first give a definition. For two sequences $(a_1,\cdots,a_l)$ and $(b_1,\cdots,b_{l^\prime})$ with $l< l^\prime$ and $a_i$ = $b_i$ for any  $i \in [l]$, we regard that $(b_1,\cdots,b_{l^\prime})$ is larger than $(a_1,\cdots,a_l)$ \emph{in lexicographic order}.

In what follows, let $G$ be a  graph and  $\{a_1,\ldots,a_m\} \subseteq V(G)$, where $m$ is a positive integer. Write $A=\{a_1,\ldots,a_m\}$.  Now we show the following result holds.
\begin{lemma}\label{lemma1}
Let $G$ be a $(2m+6)$-connected graph and let $e_1,e_2$ be two distinct edges of $G-A$. If there exists a cycle $C$ in $G$ such that $\{e_1,e_2\}\subseteq E(C)$ and $G-V(C)$ has a block (resp. component) $B$ containing  $A$, then there exists a cycle $C^\prime$ in $G-A$ such that $\{e_1,e_2\}\subseteq E(C^\prime)$ and $G-V(C^\prime)$ is 2-connected (resp. connected).
\end{lemma}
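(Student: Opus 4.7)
The strategy is an extremal argument. Among all cycles $C$ in $G$ with $\{e_1,e_2\}\subseteq E(C)$ and $G-V(C)$ possessing a block (resp.\ component) $B$ with $A\subseteq V(B)$, I would choose one so that $|V(B)|$ is maximum; the hypothesis of the lemma guarantees such a cycle exists. Because $A\subseteq V(B)\subseteq V(G)\setminus V(C)$, the chosen cycle $C$ automatically avoids $A$, so taking $C':=C$ reduces the task to showing that $G-V(C)$ is itself 2-connected (resp.\ connected).

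Toward a contradiction, assume this fails. Write $C=b_1b_2P_1b_3b_4P_2b_1$, where $e_1=b_1b_2$, $e_2=b_3b_4$, and $P_1,P_2$ are the two $C$-arcs obtained by deleting $e_1,e_2$; set $V_i=V(P_i)\setminus\{b_1,b_2,b_3,b_4\}$ for $i=1,2$. Under this assumption $G-V(C)$ contains a connected subgraph $D$ vertex-disjoint from $V(B)$ and separated from $V(B)$ inside $G-V(C)$. Since $V(C)$ separates $V(D)$ from $V(B)$ in $G$ and $G$ is $(2m+6)$-connected, $N_G(V(D))\subseteq V(C)$ is a $V(D)$-$V(B)$ separator of size at least $2m+6$. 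Discarding the four vertices $b_1,b_2,b_3,b_4$ leaves at least $2m+2$ neighbours of $V(D)$ inside $V_1\cup V_2$; by pigeonhole at least two of them lie on the same arc. WLOG say $x,y\in V_1$ with $x$ preceding $y$ on $P_1$; choosing $x$ (resp.\ $y$) to be the first (resp.\ last) neighbour of $V(D)$ on $V_1$ ensures $P_1(x,y)\neq\emptyset$.

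Since $D$ is connected and $x,y\in N_G(V(D))$, there exists an $x$-$y$ path $R$ in $G[\{x,y\}\cup V(D)]$ whose interior lies in $V(D)$. Replacing $P_1[x,y]$ in $C$ by $R$ yields a new cycle
\[
C'':=b_1b_2\,P_1[b_2,x]\,R\,P_1[y,b_3]\,b_3b_4\,P_2\,b_1
\]
through $e_1,e_2$ that still avoids $A$. One then argues that in $G-V(C'')$ the block (resp.\ component) containing $A$ strictly contains $V(B)$: the released interior vertices $V(P_1(x,y))$, together with the part of $V(D)$ not used by $R$, merge with $V(B)$ in $G-V(C'')$, contradicting the extremal choice of $C$.

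\textbf{Main obstacle.} The delicate point is the final ``strictly larger'' claim, particularly in the 2-connected version, where it is not enough for the new block merely to contain $V(B)$; one must also show 2-connectedness of the merged structure. This is where the slack in the $(2m+6)$-connectivity (versus, say, $(2m+4)$) is used, typically by selecting in addition a second pair $x',y'\in V_2$, or applying the rerouting twice, so that both arcs $V_1$ and $V_2$ provide the bridges required to promote mere connectivity of the merger into 2-connectivity.
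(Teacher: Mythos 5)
Your overall strategy (an extremal choice of $C$ plus rerouting through an offending piece $D$) is the same spirit as the paper's proof, but there is a genuine gap at precisely the step you flag as the main obstacle: the claim that in $G-V(C'')$ the block (resp.\ component) containing $A$ strictly contains $V(B)$. Nothing forces this. The released vertices of $P_1(x,y)$ and the unused part of $D$ may have all of their neighbours on the old cycle, inside $D$, or in other components of $G-V(C\cup B)$, and none in $B$; in that case the block (resp.\ component) containing $A$ in $G-V(C'')$ is again exactly $B$, your extremal quantity $|V(B)|$ does not increase, and the argument stalls. This is exactly why the paper does not maximize $v(B)$ alone: it lists the components $B_1,\dots,B_t$ of $G-V(C\cup B)$ by size and maximizes the whole vector $(v(B),v(B_1),\dots,v(B_t))$ lexicographically, works with the smallest component $B_t$, runs a closure-type ``Operation 1'' to enlarge the attachment spans $P_i[c_i,d_i]$, proves structural claims (no edge from any $B_j$, $j\le t-1$, to the interior segments, and at most one vertex $z_i$ of $B$ attached to each interior segment), and then finishes with a separator/counting argument against $(2m+6)$-connectivity rather than with a direct ``$B$ grows'' step. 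Your sketch contains no substitute for these ingredients, and the proposed remedy (choosing a second pair $x',y'$ or rerouting twice) does not address the core issue that $B$ may simply fail to grow under any single reroute.

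A second, smaller problem is the 2-connected case of your setup. If $G-V(C)$ is connected but $B$ is a proper block, the piece $D$ you select is separated from $B$ only through a cut vertex $z$ of $G-V(C)$, so $N_G(V(D))\subseteq V(C)\cup\{z\}$ rather than $N_G(V(D))\subseteq V(C)$; both your separation statement and the count of at least $2m+2$ attachments on $V_1\cup V_2$ need this correction (the paper carries such vertices $z, z_1, z_2$ explicitly into its final separator argument). Note also that enlarging the component of $G-V(C'')$ containing $A$ does not by itself enlarge the block containing $A$, so even a repaired strict-increase argument must be formulated for blocks, not components, in that case.
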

\begin{proof}
On the contrary, suppose this lemma fails. Let  $B_1,\ldots, B_t$ denote the components of $G-V(C\cup B)$ such that $v(B_1)\geq \cdots \geq v(B_t)$.  We choose $C$ and $B$ such that
$$(v(B), v(B_1), \cdots ,v(B_t)) \text{ is as large as possible in lexicographic order.}$$

By deleting $e_1,e_2$, we have two disjoint paths, which denoted by $P_1$ and $P_2$. Further, for each $i\in [2]$, assume that $P_i$ is a path from  $b_i$ to $b_i^\prime$. By our choice, the following result holds.

\begin{claim}\label{claim3}
Suppose that $u_1, v_1, u_2,v_2$ are four distinct vertices and $u_i, v_i$ occur along the orientation of $P_i$ for each $i\in [2]$. If $\{u_1v_2, u_2v_2\}\subseteq E(G)$, then $u_iv_i\in E(P_i)$ for each $i\in [2]$. Furthermore, $P_1,P_2$ are induced.
\end{claim}

 For each $i\in [2]$,  let $c_i, d_i \in  N_{P_i}(B_t)$ with $P_i[c_i, d_i]$ maximal. Next we will perform
the following operation, and we shall update the vertices $c_i$ and $d_i$ for some  $i\in [2]$ at each step.\\
\textbf{Operation 1.} For each $i\in [2]$, we define
\begin{equation*}
Q_i=\left\{
\begin{aligned}
&P_i[b_i,c_i) \cup P_i(d_i,b_i^\prime],\ \text{if}\ c_i \text{ and } d_i \text{ exist}, \\
&P_i,\  \text{otherwise}.
\end{aligned}
\right.
\end{equation*}
Suppose that there exists an edge $uv$ connecting $Q_i$ and $P_j(c_j,d_j)$ for $\{i,j\}=\{1,2\}$ with $u \in V(Q_i)$ and $v \in V(P_j(c_j,d_j))$. If $u\in V(P_i[b_i,c_i))$, then we regard $u$ as the
new $c_i$, and if $u\in V (P_i(d_i,b_i^\prime])$, then we regard $u$ as the new $d_i$. If $c_i$ and $d_i$ do not exist, then let
$c_i = d_i = u$ as the new $c_i$ and $d_i$.

We perform Operation 1 as many times as possible. Note that it must stop, since  $V(Q_1 \cup Q_2)$ becomes smaller in each step. For the last $c_i$'s and $d_i$'s, let $C= \{c_i: i\in [2]
\text{ and } c_i \text{ exists} \}$ and $D= \{d_i: i\in [2]
\text{ and } d_i \text{ exists} \}$.
By the definition of $C$ and $D$, we obtain the following claim.

\begin{claim}\label{claim1}
 There exists no edge connecting $Q_1\cup Q_2$ and $ P_1(c_1,d_1) \cup P_2(c_2,d_2)\cup B_t$.
\end{claim}

 The following claim is crucial for our proof.

\begin{claim}\label{claim2}
The following statements hold.

(i) For any $j\in [t-1]$, there exists no edge connecting $B_j$ and $P_1(c_1,d_1) \cup P_2(c_2,d_2)$.

(ii) For any $i\in [2]$, there exists a vertex $z_i$ in $B$ such that there exist no edges connecting $B -\{z_i\}$ and $P_i(c_i,d_i)$.
\end{claim}
\begin{proof}
For each $i\in [2]$,  let $u_{i,1}, u_{i,2} \in  N_{P_i}(B_t)$ with $P_i[u_{i,1}, u_{i,2}]$ maximal. On the contrary, suppose that (i) and (ii) fail. Now we prove that there exists some $j\in [2]$ such that $u_{j,1}\in C$. If not, then Operation 1  yields that there exist two vertices $c_i^\prime\in P_i[b_i,u_{i,1})$ and $w_j\in P_j(u_{j,1}, b_j^\prime)$ such that $c_i^\prime w_j\in E(G)$ with $\{i,j\}=\{1,2\}$. One may assume $i=1$ and $j=2$. Let $c_2^\prime$ be the first vertex we get by updating $u_{2,1}$. Then $c_2^\prime\in P_2[b_2,u_{2,1})$ and  there exists $w_1\in P_1(c_1^\prime, b_1^\prime)$ such that $c_2^\prime w_1\in E(G)$, which contradicts Claim \ref{claim3}. Therefore, there exists some $j\in [2]$ such that $u_{j,1}\in C$. Similarly, there also exists some $j^\prime\in [2]$ such that $u_{j^\prime,2}\in D$. Then the proof will be divided into the following two cases.

\textbf{Case 1: $j\neq j^\prime$.}

One may assume $j=2$ and $j^\prime=1$. By our assumption and symmetry, assume $u_{1,1}\notin C$. It follows from  Operation 1 that $c_1$ has an neighbour $w_2$ in $P_2(u_{2,1},d_2)$. So, $G[V(B_t \cup C)-P_1(c_1,u_{1,2})]$ has a cycle $C^\prime$ such that $\{e_1,e_2\}\subseteq E(C^\prime)$. By our choice, this claim holds.


\textbf{Case 2: $j=j^\prime$.}

One may assume $j=j^\prime=1$. Then this claim holds for $i=1$.  It follows from Case 1 that $u_{2,1}\notin C$  and $u_{2,2}\notin D$. By Operation 1,  both $c_2$ and $d_2$  have an neighbour in $P_1(u_{1,1},u_{1,2})$. Suppose that $\{w_1,w_2\}\subseteq V(P_1(u_{1,1},u_{1,2}))$ and $\{c_2w_1,d_2w_2\}\subseteq E(G)$. Owing to Claim \ref{claim3}, $w_1,w_2$ occur along the orientation of $P_1$.  So, $G[V(B_t \cup C)-P_2(c_2,d_2)]$ has a cycle $C^\prime$ such that $\{e_1,e_2\}\subseteq E(C^\prime)$. By our choice, this claim holds for $i=2$.
\end{proof}

For each $i\in [2]$, we take the vertex $z_i$ as in Claim \ref{claim2} (ii), and let $z$ be a cut vertex of $G$ which
separates $B_t$ and $B$. By Claims \ref{claim1} and \ref{claim2}, $C \cup D \cup
\{z_1, z_2, z\}$ separates $P_1(c_1,d_1)\cup P_2(c_2,d_2) \cup  B_t$ from the other part (Possibly, $z_1 = z_2$  and $z\in \{z_1,z_2\}$). Combining this with the connectivity of $G$ implies
$$G-(C \cup D \cup
\{z_1, z_2, z\})-(P_1(c_1,d_1)\cup P_2(c_2,d_2) \cup  B_t)=\emptyset.$$ So $\{b_1,b_1^\prime\}\subseteq  C$ and $\{b_2,b_2^\prime\}\subseteq D$. By the connectivity of $G$, there exists an edge connecting $B_{j^\prime}$ and $P_1(b_1,b_1^\prime) \cup P_2(b_2,b_2^\prime)$ for some $j^\prime\in [t-1]$ or there are at least two edges connecting $B$ and $P_j(b_j,b_j^\prime)$ for some $j\in [2]$, which contradicts Claim \ref{claim2}. This proves this lemma.
\end{proof}

\begin{proof}
\noindent\textbf{of Theorem \ref{theorem3}.} Theorem \ref{theorem3} follows easily from Corollary \ref{corollary2} and Lemma \ref{lemma1}.
\end{proof}
\begin{proof}
\textbf{of Theorem \ref{theorem7}.} Let us first show $f(m,2,2)\leq 2m+6$.
Let $G$ be a $(2m+6)$-connected graph. Owing to Corollary \ref{corollary1} (ii), $(G,A,b_1,b_2)$ is (2,2)-feasible. Therefore, there exists a cycle $C$ containing  $b_1,b_2$, and $G-V(C)$ has a block containing $A$. By the connectivity of $G$ and Lemma \ref{lemma1}, this result holds.

  Now we will prove $f(m,1,2)\leq 2m+4$.  Let $G$ be a $(2m+4)$-connected graph and suppose that $G$ does not satisfy Theorem \ref{theorem7}. Owing to Corollary \ref{corollary1} (i), $(G,A,\{b_1,b_2\})$ is $2$-feasible. This implies that there is a $b_1$-$b_2$ path $P$ and $A$ is contained in a block $B$ of $G-V(P)$. Let $B_1,\ldots, B_t$ denote the components of $G-V(P\cup B)$ such that $v(B_1)\geq \cdots \geq v(B_t)$.  We choose $P$ and $B$ such that ($v(B), v(B_1), \ldots ,v(B_t)$) is as large as possible in lexicographic order.

It suffices to show that $t=0$. On the contrary, suppose $t \geq 1$. Since $G$ is $(2m+4)$-connected, there exist $2m+4$ disjoint paths from $B_t$ to $B,B_1,\ldots,B_{t-1}$. Let $u_1, u_2 \in  N_P(B_t)$
with $P[u_1, u_2]$ maximal, and either $u_1^\prime, u_2^\prime \in  N_{P(u_1,u_2)}(B)$ or  $u_1^\prime\in  N_{P(u_1,u_2)}(B_j)$ for some $j\in [t-1]$. Then we get a contradiction against our choice. This completes the proof.
\end{proof}

\end{document}